
\documentclass[12pt]{article}
\usepackage{amsmath,amsfonts,amssymb,amsthm}
\usepackage{hyperref}
\usepackage{graphicx,color}

\usepackage{chicago}
\renewcommand{\cite}{\shortciteN}

\setlength{\textwidth}{16cm}
\setlength{\textheight}{23.5cm}
\setlength{\oddsidemargin}{0cm}
\setlength{\evensidemargin}{0cm}
\setlength{\topmargin}{-2cm}
\normalbaselines
\raggedbottom
\setlength{\headsep}{14mm}


\newcommand{\eps}{\varepsilon}
\newcommand{\ttheta}{T}
\def\bea{\begin{eqnarray}}
\def\eea{\end{eqnarray}}
\newcommand{\bean}{\begin{eqnarray*}}
\newcommand{\eean}{\end{eqnarray*}}


\newcommand{\I}{{\bf 1}}
\newtheorem{proposition}{Proposition}[section]
\newtheorem{theorem}[proposition]{Theorem}

\newtheorem{lemma}[proposition]{Lemma}
\newtheorem{remark}[proposition]{Remark}
\theoremstyle{definition}

\numberwithin{equation}{section}
\newcommand{\nc}{\newcommand}
\nc{\0}{{\bf o}}
\nc{\dist}{{\rm dist}}
\nc{\bx}{{\bf x}}
\nc{\R}{{\mathbb R}}
\nc{\N}{{\mathbb N}}
\nc{\Z}{{\mathbb Z}}
\nc{\BP}{\mathbb{P}}
\nc{\BE}{\mathbb{E}}
\nc{\BQ}{F}
\nc{\bN}{{\mathbf N}}
\nc{\bM}{{\mathbf M}}
\nc{\bG}{{\mathbf G}}
\nc{\bU}{{\mathbf U}}
\nc{\BX}{{\mathbb X}}
\nc{\bg}{{\mathbf g}}

\nc{\cF}{\mathcal{F}}
\nc{\cB}{\mathcal{B}}

\DeclareMathOperator{\diam}{\mathrm{diam}}

\renewcommand{\phi}{\varphi}

\renewcommand{\emptyset}{\varnothing}

\begin{document}
\author{G\"unter Last\thanks{Karlsruhe Institute of Technology,
    Institut f\"ur Stochastik, Englerstrasse 2, D-76131 Karlsruhe,
    Germany. Email: \texttt{guenter.last@kit.edu}} 
\and Mathew D. Penrose\thanks{Department of Mathematical Sciences, University of Bath,
Bath BA2 7AY, United Kingdom,
Email: \texttt{m.d.penrose@bath.ac.uk}}
\and Sergei
Zuyev\thanks{Department of Mathematical Sciences, Chalmers University of 
Technology and University of Gothenburg, Gothenburg, Sweden. Email:
\texttt{sergei.zuyev@chalmers.se}}
}

\title{On the capacity functional of the infinite cluster
  of a  Boolean model}
\date{\today}

\maketitle

\begin{abstract}
  The original 2017 version of this paper \cite{LPZ17}
  contains a major
  gap in the proofs. In the subsequent publication \cite{LPZ24}
  we indicated how to fix this. For convenience of
  the reader, we here update the original paper to incorporate the
  suggested fix.
 
  \smallskip
  Consider a Boolean model in $\R^d$ with balls of random, bounded
  radii with distribution $F_0$, centered at the points of a Poisson
  process of intensity $t>0$.  The capacity functional of the infinite
  cluster $Z_\infty$ is given by
  $\theta_L(t) = \BP\{ Z_\infty\cap L\ne\emptyset\}$, defined for each
  compact $L\subset\R^d$.

  We prove for any fixed $L$ and $F_0$ that $\theta_L(t)$ is
  infinitely differentiable in $t$, except at the critical value
  $t_c$; we give a Margulis-Russo type formula for the derivatives.
  More generally, allowing the distribution $F_0$ to vary and viewing
  $\theta_L$ as a function of the measure $F:=tF_0$, we show that it
  is infinitely differentiable in all directions with respect to the
  measure $F$ in the supercritical region of the cone of positive
  measures on a bounded interval.

  We also prove that $\theta_L(\cdot)$ grows at least linearly at the
  critical value.  This implies that the critical exponent known as
  $\beta$ is at most 1 (if it exists) for this model.  Along the way,
  we extend a result of H.~Tanemura (1993), on regularity of the
  supercritical Boolean model in $d \geq 3$ with fixed-radius balls,
  to the case with bounded random radii.
\end{abstract}

\begin{flushleft}
\textbf{Key words:} continuum percolation, Boolean model, infinite cluster, 
capacity functional, percolation function, Reimer inequality,
Margulis-Russo type formula
\newline
\textbf{MSC (2010):} 60K35, 60D05 
\end{flushleft}


\section*{Foreword}

This is a corrected version of
our 2017 paper \cite{LPZ17}.
The main issue needing correction
is that Equations (4.4) and (4.5) of \cite{LPZ17}, the
so-called  {\em stabilization} and  {\em stopping radius}
properties of certain quantities $Z_{K}(\varphi,\varphi')$
and $R_{K,b}(\varphi,\varphi')$, 
can fail with the definitions of these quantities used  in \cite{LPZ17}.

Indeed, suppose for example that $b=1$, $K=\{\0\}$ (where $\0$ denotes
the origin in $\R^d$) and $\varphi$ is such that
$\dist(\0,Z(\varphi)) = \dist(\0,Z_\infty(\varphi)) \geq 99$), and
moreover $Z(\varphi')$ provides a `bridge' from $\0$ to
$Z_\infty(\varphi)$ (i.e., has a component that contains $\0$ and also
intersects $Z_\infty(\varphi)$).

Then using the definition in \cite{LPZ17}, we have
$Z_K(\varphi,\varphi')=\emptyset$ so $R_{K,b}(\phi,\varphi') =
1$. However, if we take $\psi= \varphi'|_{[B_9 ]}$ then both (4.4) and
(4.5) of \cite{LPZ17} fail for $n=1$.  A version of this example was first
suggested to us by Emil Schmitz.  In his Master's thesis,
\cite{Schmitz22} describes a way to fix this error in dimension
$d=2$. We briefly indicated how to fix
the issue in general dimensions in \cite{LPZ24} (this
work is independent of \cite{Schmitz22}). We present here a full
corrected version of our paper (the fix in \cite{LPZ24} itself contains
a gap, which we fill in here; see the remark after \eqref{e:A'}.)  We also took the opportunity to
correct some unrelated, relatively minor slips along the way.

\section{Introduction}
\label{secintro}

The Boolean model is a fundamental model of random sets in stochastic
geometry; see \cite{Hall88,MeesterRoy96,SKM,SW08}.  It is obtained by
taking the union $Z$ of a collection of (in general, random) compact
sets (known as grains) centered on the points of a homogeneous Poisson
process of intensity $t$ in $d$-space.  For a large class of grain
distributions, it is known that for $t$ above a critical value $t_c$
that is dependent on the grain distribution, the resulting random set,
denoted $Z(t)$, includes a unique infinite component, denoted
$Z_\infty(t)$.

The random set $Z_\infty=Z_\infty(t)$ is an important and fascinating object of
study. One way to investigate its distribution is through its {\em
  capacity functional}, defined as the set function $ L \mapsto
\theta_L(t):=\BP \{ Z_\infty(t) \cap L \neq \emptyset \}$, defined
for compact
$L \subset \R^d$.  If $L$ is a singleton, then
$\theta(t):=\theta_{\{0\}}(t)$ is called the {\em volume fraction} of
$Z_\infty(t)$, and in the case where the grains are all translates of a
fixed set $K_0$ (e.g.\ a unit ball), $\theta_{K_0}(t)$ is (loosely
speaking) the proportion of grains that lie in $Z_\infty$.  More
generally, the capacity functional of a random set and, in particular,
of $Z_\infty$, determines its distribution; see  \cite{SW08}.

In this article we investigate the capacity functional of $Z_\infty$
as a function of the intensity $t$.  
We consider the case where the grains are balls with random
radii with distribution $F_0$ for some probability measure $F_0$ on
$\R_+$ with bounded support.

  We show for any compact
$L\subset\R^d$ that $\theta_L(t)$ 
 is infinitely differentiable in $t$ for $t >t_c$ (it is
identically 0 for $t<t_c$), thereby adding to
earlier results on continuity of $\theta_L(t)$, $t > t_c$, and give an
explicit expression for the derivatives (Theorem \ref{diff}).  
More generally, allowing $F_0$ to vary and viewing
$\theta_L$ as a function of the measure $F:=tF_0$,
 we show (Theorem \ref{3diff}) that it
 is infinitely differentiable in all directions
with respect to the measure $F$ in the supercritical region
 of the cone of positive measures on a bounded interval.

We also prove (in Theorem \ref{texponent}) that
$\theta_L$ grows at least linearly in the right neighbourhood of the
threshold $t_c$. This is similar behaviour to that of the percolation
function
 in discrete percolation models; see
\cite[Ch.~5]{Gr99} and the references therein.
See \cite{Duminil15}
 for a  recent alternative proof of the discrete result,
under the assumption of non-percolation at the critical
point.  
It would be interesting to try to adapt this to the continuum.

In the course  of  proving the results 
mentioned above,
we show (in Theorem \ref{slabthm})
that if our Boolean model with random but bounded radii
is supercritical in $\R^d$ for $d \geq 3$, then
it is also supercritical in a sufficiently thick slab.  
Previously, only the case with fixed radii had been considered,
although the analogous result in the lattice is well known 
(\cite{GM}).
Also noteworthy is the fact that our proof of
Theorem \ref{texponent}  requires the continuum 
Reimer inequality 
 (\cite{GuRao99}).

We believe that our methods could also be used 
to give smoothness of the $n$-point
connectivity function as a function of $t$ for $t > t_c$.
We also expect similar methods to
be applicable for more general grains. See
Section \ref{secfinal} for further discussion.

\section{Preliminaries}
\label{secprelim}

Let $d \in \N$ with $d \geq 2$.  We shall be dealing with a stationary
(spherical) Boolean model in $\R^d$ which is described by means of a
(marked) point process. Consider the space $\BX:=\R^d\times\R_+$
(where $\R_+:=[0,\infty)$), equipped with the Borel $\sigma$-field
$\mathcal{B}(\BX)$ and the space $\bN$ of integer-valued locally
finite measures $\varphi$ on $\mathcal{B}(\BX)$.  
For $b \in (0,\infty)$, let $\bN^b$ be the space of all $\varphi \in \bN$
that are supported by $\R^d\times [0,b] $.  Let $\mathcal{N}$
denote the smallest $\sigma$-algebra of subsets of $\bN$ making the
mappings $\varphi\mapsto\varphi(D)$ measurable for all measurable
$D\subset \BX$.  It is often convenient to write $z\in\varphi$ instead
of $\varphi(\{z\})>0$.  

A {\em point process} on $\BX$ is then a measurable
mapping $\Phi$ from some probability space $(\Omega,\mathcal{F},\BP)$
into the measurable space $(\bN,\mathcal{N})$. It is convenient to fix
the mapping $\Phi$ and to consider for any locally finite measure
$\mu$ on $\mathcal{B}(\BX)$ a probability measure $\BP_\mu$ on
$(\Omega,\mathcal{F})$ such that the distribution
$\BP_\mu\{\Phi\in\cdot\,\}$ of $\Phi$ is that of a {\em Poisson process}
with \emph{intensity measure} $\mu$.
 This means that under $\BP_\mu$
the point process $\Phi$ has independent increments,
with $\Phi(D)$ Poisson distributed with mean $\mu(D)$, for each
 bounded $D \in \mathcal{B}(\BX)$.  See, e.g., \cite{Kallenberg} or 
\cite{LaPe16}.
Expectation under $\BP_\mu$ is denoted by $\BE_\mu$.

For $r >0, x \in\R^d$, we denote by $B_{r}(x)$ the closed Euclidean
ball of radius $r$ centered at $x$. Also we write $0$ for the origin of
$\R^d$ and $B_r$ for $B_r(0)$.

Any $\varphi\in\bN$ is of the form $\sum_i \delta_{z_i}=\sum_i \delta_{(x_i,r_i)}$, 
where the Dirac measure $\delta_z$ at $z\in\BX$ is defined
by $\delta_z(D)=\I\{z\in D\}$ for every $D\in\mathcal{B}(\BX)$.  
We then define
$Z(\varphi):=\cup_i B_{r_i}(x_i)$. 
 The balls $B_{r_i}(x_i)$ are referred
to as \emph{grains}.

Connected components of $Z(\varphi)$ are called \emph{clusters}. Given
$\phi\in\bN$, let $Z_\infty(\varphi)$ denote the union
of the unbounded connected components of $Z(\phi)$, i.e.\ of the
infinite clusters.

In this paper we deal with Poisson processes whose intensity
measure is of the form $\mu(d(x,r)):=dx\, F(dr)$,
where  $dx$ is the $d$-dimensional Lebesgue measure and
$F$ is a finite measure on $\R_+$ (not necessarily
 a probability measure).
When $\mu$ is of this form, we shall write $\BP_{F}$ for $\BP_{\mu}$
and $\BE_{F}$ for $\BE_{\mu}$. 
Also let $\Pi_F$ denote the distribution of $\Phi$
under $\BP_F$, i.e.\ 
the probability measure on $(\bN,\mathcal{N})$
given by $\Pi_F(\cdot) = \BP_F\{\Phi \in \cdot\}$.
 Set $|F| = F(\R_+)$, the total mass of $F$.
  Then $|F|$ is called the \emph{density} (or
\emph{intensity}) of the Poisson process under~$\BP_{F}$.
We shall assume that $F$ has  no atom at $\{0\}$;
in any case the singletons
do not contribute to percolation properties of $Z$ we study here.

Let $\bM$ (respectively $\bM_1, \bM_\pm$) denote the class of finite
non-zero Borel measures (respectively,
probability measures and finite signed measures) $F$ on 
$\R_+$ satisfying $F(\{0\})=0$.
Given  $b\in (0,\infty)$, we write $\bM^b$ (respectively $\bM_1^b$,
$\bM_{\pm}^b$) for the measures that are
supported by $[0,b]$, i.e.\ that satisfy $F((b,\infty))=0$.
Let
$\bM^\sharp:=\cup_{b \in (0,\infty)} \bM^b$, the 
measures with bounded support. Likewise, set
$\bM_1^\sharp:=\cup_{b \in (0,\infty)} \bM_1^b$ and
$\bM_\pm^\sharp:=\cup_{b \in (0,\infty)} \bM_\pm^b$.

Let $F \in \bM$.  Under $\BP_{F}$,
the set $Z:=Z(\Phi)$ is called a \emph{Boolean model}.
It can be constructed,
alternatively, by first generating an infinite independent sequence
$\{R_i\}$ from the probability distribution $F(\cdot)/|F|$, and then
placing balls of the corresponding radii at the points $\{X_i\}$
of a homogeneous Poisson point process with intensity $|F|$ in
$\R^d$. This equivalence stems from the independent marking property
of a Poisson process; for more details, see, e.g., \cite{Kallenberg} 
 or \cite[Ch.5]{LaPe16}.

The point process $\Phi$ is {\em stationary}
under $\BP_{F}$, which means that
for all $x\in\R^d$ we have
$\BP_{F}\{\ttheta_x\Phi\in\,\cdot\,\}=\BP_{F}\{\Phi\in\,\cdot\,\}$, 
where for any $\mu\in \bN$, the measure
$\ttheta_x\mu\in \bN$ is defined by
$\ttheta_x\mu(B\times C):=\mu((B+x)\times C)$, with $B+x:=\{y+x:\
y\in B\}$.
Hence $Z(\Phi)$ is stationary as well,
 that is $\BP_{F}\{Z+x\in\cdot\}$ does not depend
on $x,\ x\in\R^d$.
Since $Z_\infty(\Phi) + x = Z_\infty(T_{-x} \Phi)$ for all
$x \in \R^d$, $Z_\infty(\Phi)$ is also stationary.

The {\em volume fraction} of the
Boolean model is the probability that $Z$ covers a fixed point,
 for instance the origin 0, or in other words
 the proportion of  space covered by grains:
\begin{displaymath}
  \BP_{F}\{0\in Z\}=
1-\exp\Big[-
  \kappa_d\int r^d\,F(dr)\Big],
\end{displaymath}
where $\kappa_d:=\pi^{d/2}/\Gamma(d/2+1)$ stands for the volume of a $d$-dimensional unit ball.


Under $\BP_F$ the sets $Z(\Phi)$ 
and  $Z_\infty:=Z_\infty(\Phi)$  are
 almost surely closed. They are
  {\em random closed sets}, see \cite{Mol05} or \cite{SW08}.
Our primary object of
study here is $Z_\infty$.
 For each compact
$L\subset \R^d$, let
\begin{displaymath}
  \theta_L(F):=\BP_{F}\{L\cap Z_\infty\ne\emptyset\},
\end{displaymath}
so that $L\mapsto \theta_L(F)$ is the capacity functional of
$Z_\infty$ under $\BP_{F}$. As mentioned in Section \ref{secintro},
 the capacity
functional determines the distribution of $Z_\infty$.
In particular we set
\begin{displaymath}
  \theta(F):=\theta_{\{0\}}(F)=
\BP_{F}\{0\in Z_\infty\}=\BE_{F} |Z_\infty\cap[0,1]^d|,
\end{displaymath}
the 
 volume fraction
of $Z_\infty$ under $\BP_{F}$ (also called the
\emph{percolation function}).

By ergodicity (see \cite{MeesterRoy96}), 
if $\theta(F) >0$
then $\BP_{F}\{Z_\infty\ne\emptyset\} = 1$
and moreover the infinite cluster is $\BP_{F}$-a.s.\ unique
(i.e., $Z_\infty$ has only one connected component);
see  
 \cite[Theorem 3.6]{MeesterRoy96}.
In this case we say that {\em percolation occurs}.
Conversely,
if $\theta(F) =0$
then
$\BP_{F}\{Z_\infty\ne\emptyset\} =0$.
Setting $\bU$ to be the class
of $\phi \in \bN$ such that $Z(\phi)$ has
at most one unbounded component, we thus have
\begin{align} 
\BP_{F} \{ \Phi \in \bU \} =1, \mbox{ for any } 
F \in \bM.
\label{uic}
\end{align} 
Given $F \in \bM$ (not necessarily a probability measure),
consider the family of measures of the form $F^* = t F$ with $ t >0$.
By a coupling argument, $\theta(tF)$ is nondecreasing in $t$.  The
{\em critical value} (or {\em percolation threshold}) 
$t_c(F)$ is the supremum of those $t$ such
that $\theta(tF) = 0 $.
If $\int r^{d}\,F(dr) < \infty$ 
(for example, if $F \in \bM^\sharp)$),
then $0 < t_c(F) < \infty$;  
see \cite{Gouere08}.
 If $t_c(F) <1$ we say that
$F$ is {\em strictly supercritical}.

It is known that $\theta(tF)$
 is continuous in $t$ at least for $t \neq t_c(F)$, and 
  right-continuous for all $t$; see 
 \cite[Theorem 3.9]{MeesterRoy96}. 
For $d=2$, it is known
  \cite[Theorem 4.5]{MeesterRoy96} that
 $\theta(t_c(F) F)=0$ (and therefore $\theta(tF)$ is continuous for all $t$),
  and this is commonly believed to be true for $d \geq 3$ too.

\begin{remark}\label{rem:pf}\rm
For $r\ge 0$, the quantity $\BP_{F}\{B_r\subset
  Z_\infty(\Phi+\delta_{(0,r)})\} =\BP_{F}\{B_r\cap
  Z_\infty\ne\emptyset\}=\theta_{B_r}({F})$ can be interpreted as the
  conditional probability (under $\BP_{F}$)
 that $B_r$ belongs to the infinite cluster
  given that $(0,r)$ belongs to $\Phi$. Therefore $\int
  \theta_{B_r}({F})\, F(dr)/|F|$ is the conditional (Palm) probability that
  a typical grain (centered at the origin) is a part of the infinite
  cluster, i.e.\ the proportion of grains belonging to the unbounded
  connected component.
\end{remark}

Next we describe two important properties of Poisson
processes which we use in this paper. One is
the \emph{Mecke identity} (see e.g.\ \cite[Ch.4]{LaPe16}):
\begin{equation}
 \label{eq:mecke}
  \BE_\mu \int f(z,\Phi)\,\Phi(dz) 
  =\BE_\mu\int f(z,\Phi+\delta_{z})\,\mu(dz)
\end{equation}
for any measurable $f:\BX\times\bN\rightarrow \R_+$.
This identity characterises the Poisson process.

Another important result is the {\em perturbation formula}
for functionals of Poisson processes,
an analogue of the 
 Margulis-Russo formula for Bernoulli fields.
  For bounded measurable $f:\bN\rightarrow\R$, and $z \in \BX$,
define 
$D_zf(\varphi):= f(\varphi + \delta_z)
- f(\varphi)$, for all $\varphi \in \bN$. 
For $n\ge 2$ and $(z_1,\ldots,z_n)\in\BX^n$
we define a function
$D^{n}_{z_1,\ldots,z_n}f:\bN (\BX)\rightarrow\R$
inductively by
\begin{align}
\label{differn}
D^{n}_{z_1,\ldots,z_{n}}f:=D_{z_{1}}D^{n-1}_{z_2,\ldots,z_{n}}f.
\end{align}
The operator $D^n_{z_1,\ldots,z_n}$ is symmetric in $z_1,\ldots,z_n$;
indeed, by induction
\begin{align}
\label{differn2}
D^{n}_{z_1,\ldots,z_{n}}f(\varphi) = \sum_{I \subset \{1,\ldots,n\} }
(-1)^{n-|I|} f \left(\varphi + \sum_{i \in I} \delta_{z_i}  \right) 
\end{align}
where $|I|$ denotes the number of elements of $I$.

\begin{proposition}\label{russo}
  Let $\mu$ be a locally finite measure and 
  $\nu$ a finite signed measure on $\mathcal{B}(\BX)$.  Let
  $f:\bN\rightarrow\R$ be measurable and bounded. 
If $\mu + a \nu$ is a measure for some $a >0$,
  then
\begin{align}\label{deriv4}
  \left. \frac{d^+}{d s} 
\BE_{\mu+s\nu} f(\Phi)
\right|_{s=0}
  &=\int\BE_{\mu}
D_z f(\Phi)
\,\nu(dz).
\end{align}
If also $\mu - a \nu$ is a measure, then
$\BE_{\mu+s\nu} f(\Phi)$ is differentiable in $s$ at
$s=0$.
\end{proposition}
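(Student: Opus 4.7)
The plan is to establish the formula \eqref{deriv4} via a thinning/superposition coupling of Poisson processes and then apply Mecke's identity \eqref{eq:mecke}. First, write the Jordan decomposition $\nu = \nu_+ - \nu_-$. The hypothesis that $\mu + a\nu$ is a measure forces $a\nu_- \leq \mu$, so $\nu_- \ll \mu$ with Radon--Nikodym derivative $g := d\nu_-/d\mu$ satisfying $g \leq 1/a$ $\mu$-a.e.; this boundedness will be the crucial ingredient for controlling error terms later.

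Second, for $s \in (0,a]$, I would couple the Poisson processes of intensities $\mu$ and $\mu+s\nu$ on a single probability space as follows: take $\Phi$ Poisson with intensity $\mu$; independently thin $\Phi$ by deleting each point $z\in\Phi$ with probability $sg(z)$, giving a deleted set $\Delta_s$ and survivors $\tilde\Phi_s$; and independently let $\Psi_s$ be Poisson with intensity $s\nu_+$. By the thinning and superposition theorems for Poisson processes, $\Phi_s := \tilde\Phi_s + \Psi_s$ is Poisson with intensity $(1-sg)\mu + s\nu_+ = \mu+s\nu$, so $\BE_{\mu+s\nu}f(\Phi) = \BE\, f(\Phi_s)$. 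Moreover $\Delta_s(\BX)$ and $\Psi_s(\BX)$ are independent Poisson variables with parameters $s\nu_-(\BX)$ and $s\nu_+(\BX)$ respectively.

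Third, decompose $\BE[f(\Phi_s)-f(\Phi)]$ according to $N_s := \Delta_s(\BX)+\Psi_s(\BX)$. On $\{N_s=0\}$ one has $\Phi_s = \Phi$; on $\{N_s\geq 2\}$ the integrand is bounded by $2\|f\|_\infty$ while $\BP\{N_s\geq 2\} = O(s^2)$, so this event contributes $O(s^2)$. On $\{N_s=1\}$, a single deletion of $z\in\Phi$ produces $f(\Phi_s)-f(\Phi) = -D_zf(\Phi-\delta_z)$, while a single addition of $z\in\Psi_s$ produces $D_zf(\Phi)$. Applying Mecke's identity \eqref{eq:mecke} to the deletion sum with test function $(z,\varphi)\mapsto g(z)D_zf(\varphi-\delta_z)$ gives an expected contribution of $-s\int \BE_\mu D_zf(\Phi)\,\nu_-(dz) + o(s)$, and Campbell's formula applied to the independent Poisson $\Psi_s$ gives an addition contribution of $s\int \BE_\mu D_zf(\Phi)\,\nu_+(dz) + o(s)$; dominated convergence (with dominating functions $2\|f\|_\infty g$ and $2\|f\|_\infty$, integrable against $\mu$ and $\nu_+$ respectively) passes to the limit as $s\downarrow 0$, yielding \eqref{deriv4}.

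Finally, if $\mu-a\nu$ is also a measure, the right-derivative formula just proved applies with $\nu$ replaced by $-\nu$, giving the right derivative of $s\mapsto\BE_{\mu-s\nu}f(\Phi)$ at $0$ as $-\int\BE_\mu D_zf(\Phi)\,\nu(dz)$; equivalently, the left derivative of $s\mapsto\BE_{\mu+s\nu}f(\Phi)$ at $0$ equals its right derivative, yielding ordinary differentiability. I expect the main technical obstacle to be the bookkeeping on $\{N_s=1\}$, where the retention products $\prod_{w\in\Phi,\,w\neq z}(1-sg(w))$ and the factor $e^{-s\nu_+(\BX)}$ must be extracted from the Mecke/Campbell calculation before passing to the limit; boundedness of $f$ together with $\nu_\pm(\BX)<\infty$ and boundedness of $g$ are precisely the ingredients that make dominated convergence and the $O(s^2)$ remainder estimate go through cleanly.
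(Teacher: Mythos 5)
Your argument is correct, but note that the paper itself does not prove Proposition \ref{russo} at all: it is quoted with references to Zuyev (1992), Molchanov--Zuyev (2000), Last (2014) and the Last--Penrose lectures, where the standard route is an expansion/absolute-continuity argument --- one writes $\BE_{\mu+s\nu}f(\Phi)$ for finite perturbations via the superposition identity $\BE_{\mu+\nu'}f(\Phi)=\sum_{n\ge 0}\frac{1}{n!}\int \BE_\mu D^n_{z_1,\ldots,z_n}f(\Phi)\,\nu'^{\,n}(d(z_1,\ldots,z_n))$ (or a Radon--Nikodym density of one Poisson law with respect to another), splits $\nu$ into $\nu_\pm$, and differentiates the resulting series with dominated convergence. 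Your thinning--superposition coupling is a genuinely different and more probabilistic route: deleting each point of $\Phi$ independently with probability $sg(z)$, $g=d\nu_-/d\mu\le 1/a$, and superposing an independent Poisson process of intensity $s\nu_+$ indeed produces a Poisson process of intensity $(1-sg)\mu+s\nu_+=\mu+s\nu$, the event of at least two changes has probability $O(s^2)$ because $\nu$ is finite, and the single-change contributions reduce via Mecke (for the deletion term, with the retention product $\prod_{w\in\Phi\setminus\{z\}}(1-sg(w))$ carried along, which tends to $1$ and is dominated by $1$) and the one-point Poisson/Campbell computation (for the addition term) to $s\int\BE_\mu D_zf(\Phi)\,\nu(dz)+o(s)$; the symmetric application to $-\nu$ then gives two-sided differentiability exactly as you say. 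What the coupling buys is an elementary, self-contained proof with an explicit $O(s^2)$ remainder, valid for locally finite $\mu$ since only the finite signed perturbation is touched; what the cited expansion approach buys is extra generality (e.g.\ square-integrable rather than bounded $f$, and higher-order derivatives in one stroke). The only points you should write out in a full version are the ones you already flag: the extraction of the retention product and the factor $e^{-s\nu_+(\BX)}$ before taking limits, and (a trivial remark) that possible multiplicities of $\Phi$ when $\mu$ has atoms do not disturb the independent-thinning construction or the Mecke computation.
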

 The proof of this perturbation formula
 can be found in \cite[Theorem~2.1]{Zue:92b} (for
 the case $\nu=\mu$), 
for finite measures in \cite[Theorem~2.1]{MolZuy:00}, and for locally
finite measures and square-integrable functions in~\cite{Last12}.
It may also be found in \cite{LaPe16}.

\section{Main results}

\subsection{Smoothness of the capacity functional}
\label{secboolean}

Our first result concerns differentiating the capacity functional
$\theta_L$ with respect to the measure $F$. This can be useful to
compare the percolation properties of different radius
distributions. For example, in \cite{GouereMarchand11} and in
\cite{MRS} the
percolation threshold for $F$ a Dirac measure (i.e. balls of fixed
radius) has been compared with the percolation threshold for $F$ the
sum of two Dirac measures (i.e. for balls of random radius with just
two possible values), or with more general $F$.  
\cite{GouereMarchand11} show that in sufficiently high
dimensions the Dirac measure does not minimise the critical
volume fraction (as had been previously conjectured) but do not quantify the
phrase `sufficiently high' and do not rule out the possibility that
the Dirac measure minimises the critical volume fraction in low
dimensions.  With sufficient analytic tools, it might be possible to
compare different radius distributions (perhaps with the same
volume fraction) by calculus.
For example, we could compare two measures $F_1$ and $F_2$ by passing
continuously from one to the other.

Our result gives the directional derivative for $\theta_L(F)$ as we
vary $F$. If we wish to keep the total measure (i.e., the density)
constant then we need to add to $F$ a {\em signed} measure with total
measure zero. More generally we may consider adding an arbitrary
signed measure $G$ to $F$.  We use notation for the classes of measures from Section
\ref{secprelim} and $D^n$ from (\ref{differn}).
\begin{theorem}\label{3diff}
  Suppose that $F \in \bM^\sharp$ with $t_c(F) <1$,
  and $G \in \bM^\sharp_{\pm}$ is
  such that $F + a G$ is a measure for some $a >0$.  Let
  $L\subset\R^d$ be compact. Then
  \begin{align}
\nonumber 
    \left. 
    \frac{d^+}{d h}\theta_L(  F + hG )\right|_{h=0} 
    = \iint
    \BP_{F} \{L\cap Z_\infty(\Phi+\delta_{(x,r)})\ne\emptyset, 
    L\cap  Z_\infty(\Phi )=\emptyset\}
\\
\times
    G(dr)dx,\,
    \label{M2conj0}
  \end{align}
  and the right-hand side of (\ref{M2conj0}) is finite.  If also
  $F - a G$ is a measure, then $\theta_L(F + hG)$ is infinitely
  differentiable in a neighbourhood of $h=0$, 
then  setting
 $\tilde{f}_L(\varphi) = \I \{L \cap Z_\infty(\varphi) \neq \emptyset \}$
for $\varphi \in \bN$,
 for all $n \in \N$
  we have
  \begin{align}
    \left. 
    \frac{d^{n}}{dh^n} \theta_L(  F + hG )\right|_{h=0}
    = \idotsint \BE_{F}   D^n_{(x_1,r_1),\ldots,(x_n,r_n)} \tilde{f}_L (\Phi) 
\nonumber \\
    \times dx_1 G (dr_1) \cdots dx_n G(dr_n).
    \label{M2conj3}
  \end{align}
\end{theorem}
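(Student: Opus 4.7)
The strategy is to apply the Margulis-Russo-type perturbation formula of Proposition \ref{russo} to $\tilde f_L$ and iterate. Write $F^*(d(x,r)) := dx\,F(dr)$ (the intensity measure of $\Phi$ under $\BP_F$) and $G^*(d(x,r)) := dx\,G(dr)$; these are only locally finite on $\BX$, so we invoke the locally finite version of Proposition \ref{russo} recorded in \cite{Last12,LaPe16}, which applies to bounded measurable $f$ provided the integrand on the right of (\ref{deriv4}) is absolutely integrable (an alternative is to truncate to $x\in B_R$, apply the finite version, and let $R\to\infty$).

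For the first derivative, take $f=\tilde f_L$, which is $\{0,1\}$-valued and non-decreasing in $\varphi$ since adding a grain can only enlarge $Z_\infty$. Hence $D_{(x,r)}\tilde f_L(\varphi)=\I\{L\cap Z_\infty(\varphi+\delta_{(x,r)})\neq\emptyset,\ L\cap Z_\infty(\varphi)=\emptyset\}$, and Proposition \ref{russo} yields (\ref{M2conj0}) once finiteness of the right-hand side is verified. For that, note that on the pivotal event, $B_r(x)$ must meet the a.s.~bounded set $C_L$ consisting of $L$ together with all finite clusters of $Z(\Phi)$ touching $L$; integrating in $x$ then bounds the integrand by $\int|G|(dr)\,\BE_F|C_L\oplus B_r|$, which is finite provided $\BE_F\diam(C_L)^d<\infty$. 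The latter follows from exponential decay of finite-cluster diameters in the supercritical regime, which is precisely what the regularity theory extended in Theorem \ref{slabthm} supplies.

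The higher derivatives are obtained by induction on $n$. Assume that $h\mapsto\theta_L(F+hG)$ is $(n-1)$-times differentiable on an open interval $I\ni 0$, with derivative given by the obvious analogue of (\ref{M2conj3}) at each $h\in I$; here $I$ is chosen so that $F+hG\in\bM^\sharp$ and remains supercritical throughout $I$, which is guaranteed for small $|h|$ by $t_c(F)<1$ together with continuity of the percolation function. Apply Proposition \ref{russo} once more, now to the bounded (by $2^{n-1}$) measurable function $\varphi\mapsto D^{n-1}_{z_1,\dots,z_{n-1}}\tilde f_L(\varphi)$, to differentiate in $h$, and then interchange this derivative with the $(n-1)$-fold integration against $G^*$ via dominated convergence. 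The symmetry of $D^n$ recorded in (\ref{differn2}) makes the resulting iterated integral order-independent, matching the right-hand side of (\ref{M2conj3}).

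The main obstacle is establishing absolute integrability of the $n$-fold integrand in (\ref{M2conj3}) uniformly on a neighbourhood of $h=0$; this is needed both to invoke the perturbation formula and to justify the dominated convergence argument. The trivial bound $|D^n_{z_1,\dots,z_n}\tilde f_L|\le 2^n$ does not control the spatial integrals. However, by (\ref{differn2}) the integrand vanishes unless each $B_{r_i}(x_i)$ influences $L$-to-$\infty$ connectivity in at least one subconfiguration $\varphi+\sum_{j\in I}\delta_{z_j}$, which forces $B_{r_i}(x_i)$ to lie within distance $b$ of the (random) union of $L$, the other $n-1$ balls, and the finite clusters of the corresponding configuration touching these sets. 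Iterating this geometric constraint and exploiting exponential tails for finite-cluster diameters furnished by the supercritical regularity results yields an integrable majorant, closing the induction and legitimising all differentiation-under-the-integral steps above.
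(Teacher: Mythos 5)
Your outline reproduces the paper's architecture in spirit (identify $D_{(x,r)}\tilde f_L$ with the pivotal indicator, prove finiteness via exponential decay of the finite clusters attached to $L$, then induct on $n$ using the expansion (\ref{differn2})), but it assumes away exactly the point the theorem is about. Proposition \ref{russo} is stated for a \emph{finite} signed perturbation $\nu$; here $\nu(d(x,r))=dx\,G(dr)$ has infinite total mass, and the paper explicitly remarks after Theorem \ref{3diff} that the validity of (\ref{deriv4}) in this infinite setting is a \emph{consequence} of the theorem, not an available input. There is no off-the-shelf ``locally finite version for bounded measurable $f$ under absolute integrability of the right-hand side'': finiteness of $\iint \BE_F|D_{(x,r)}\tilde f_L(\Phi)|\,|G|(dr)\,dx$ at $h=0$ does not by itself give differentiability at $h=0$ (one needs control on a neighbourhood in $h$ and a justification for exchanging the $h$-limit with an integral over a region of infinite measure), and the cited sources behind Proposition \ref{russo} either take $\nu$ finite or impose conditions whose verification for the non-local functional $\tilde f_L$ is precisely the work to be done. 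Your fallback (truncate to $x\in B_R$, apply the finite version, let $R\to\infty$) is indeed the paper's route, but you give no argument for the crucial interchange of $R\to\infty$ with $d/dh$. The paper does this by telescoping the difference quotient over annuli, using the stabilization and stopping-radius properties (\ref{eqstab})--(\ref{eqstop}), and bounding each increment by a quantity decaying exponentially \emph{uniformly in $h$} (Lemma \ref{lint}); that uniformity rests on the splitting $\Phi_{F+hG}=\Phi_{(1-\eps)F}+\Phi'_{\eps F+hG}$ (so the target infinite cluster comes from a fixed supercritical process independent of $h$), on Lemma \ref{condcross}, and on Theorem \ref{slabthm} for $d\ge 3$ (with a separate crossing argument for $d=2$). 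None of this appears in your proposal; note also that the exponential decay you invoke is Lemma \ref{lint}, not Theorem \ref{slabthm} itself, and that the a.s.\ truncation limit (\ref{M1}) requires uniqueness of the infinite cluster (\ref{uic}), another step you skip.

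The same foundational issue recurs at every stage of your induction, since applying Proposition \ref{russo} to $D^{n-1}_{z_1,\dots,z_{n-1}}\tilde f_L$ again involves the infinite measure $dx\,G(dr)$; the paper instead proves the first-order formula for the whole family $\tilde f_{L,\psi}$ (Lemma \ref{Mlem}) and feeds it into the induction via (\ref{differn2}). Moreover, your ``integrable majorant'' is only asserted: what is needed is a bound on the truncation increments that is simultaneously summable in the truncation index $m$ and integrable in $(x_1,\dots,x_n)$, uniformly in $h$ on a two-sided neighbourhood of $0$ (the signed part $G_-$ is handled in the paper by a Hahn--Jordan coupling you never set up). The paper obtains this through the ``moat'' annulus construction together with Lemma \ref{lint}. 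So while your sketch points in the right direction, the analytic core of the proof --- the uniform-in-$h$ exponential stabilization estimates that legitimise every differentiation-under-the-integral step --- is assumed rather than proved, and this is a genuine gap.
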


We shall prove Theorem~\ref{3diff} in Section~\ref{secpfdiff}.  The
identity (\ref{M2conj0}) tells us that the perturbation
 formula
(\ref{deriv4}) remains valid for
$f(\varphi) = \I \{ L \cap Z_\infty(\varphi) \neq \emptyset \}$ with
$\mu (dx dr) = dx \, F(dr)$ and $\nu (dx dr) = dx\, G(dr)$, though in this
case both $\mu$ and $\nu$ are {\em infinite} (but $\sigma$-finite).

Our next theorem is a corollary of Theorem~\ref{3diff}, and
significantly adds to the known results mentioned in Section
\ref{secprelim} concerning continuity of $\theta_L(tF)$ for fixed $F$,
in the case of deterministically bounded radii.
 Recall that the {\em
  Minkowski difference} $A\ominus B$ of two sets $A,B\subset\R^d$ is
defined by $\{x-y:\ x\in A,\ y\in B\}$. When $B=\{x \}$ for some 
$x \in \R^d$, we write simply $A - x$ for $A \ominus  \{x\} $.

\begin{theorem}\label{diff} Let 
$F \in \bM^\sharp_1$,
and let $L\subset\R^d$ be compact. Then $t \mapsto \theta_L(tF)$ is
infinitely differentiable on $(t_c(F),\infty)$ and setting
$\tilde{f}_L(\varphi) = \I \{ L \cap Z_\infty(\varphi) \neq \emptyset
\}$ for all $\varphi \in \bN$, we
have for $t >t_c(F)$ and $n \in \N$ that
\begin{align}
\frac{d^n \theta_L (tF) }{dt^n}  =
\int \cdots \int \BE_{tF} D^n_{(x_1,r_1)
, \ldots , (x_n,r_n) } \tilde{f}_L(\Phi) 
\nonumber \\
\times dx_1 F(dr_1)
\cdots
dx_n F(dr_n).
\label{M18}
\end{align}
In particular, for $t >t_c(F)$ we have
\begin{align}
  \frac{d}{dt}\theta_L(tF) 
=
\iint
\BP_{tF} \{L\cap Z_\infty(\Phi+\delta_{(x,r)})\ne\emptyset, 
L\cap  Z_\infty(\Phi)=\emptyset\}\,
\label{conj0}
\\
 \times dx\,F(dr)
\nonumber 
\\
~~~~~
~~~~~~~~~
  = \BE_{tF}\int
  |(Z_\infty(\Phi+\delta_{(0,r)})\ominus L)\setminus
  (Z_\infty(\Phi)\ominus L)|\,F(dr).
\label{conj}
\end{align}
\end{theorem}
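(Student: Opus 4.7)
The plan is to derive Theorem~\ref{diff} as a direct specialisation of Theorem~\ref{3diff}, taking the ``direction'' measure to equal $F$ itself. Two ingredients are needed beyond a straightforward invocation: a scaling identity for $t_c$, and a stationarity plus Minkowski-difference computation to reach the form~(\ref{conj}).

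First I would record the scaling identity $t_c(tF) = t_c(F)/t$ for $t > 0$, which is immediate from the definition of the critical value applied to the one-parameter family $s \mapsto s(tF) = (st)F$. Hence the condition $t > t_c(F)$ is equivalent to $t_c(tF) < 1$, the strict supercriticality hypothesis of Theorem~\ref{3diff}. For each fixed $t_0 > t_c(F)$ I would then apply Theorem~\ref{3diff} with the measure called $F$ there replaced by $t_0 F \in \bM^\sharp$ and with direction $G := F \in \bM^\sharp_\pm$. Both $t_0 F \pm aF = (t_0 \pm a) F$ are genuine positive measures whenever $0 < a < t_0$, so Theorem~\ref{3diff} provides infinite differentiability of $h \mapsto \theta_L((t_0 + h)F)$ near $h = 0$ together with formula~(\ref{M2conj3}) for its $n$-th derivative; substituting $G = F$ gives (\ref{M18}), and the $n=1$ case via (\ref{M2conj0}) gives (\ref{conj0}). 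Since $t_0$ was arbitrary in $(t_c(F),\infty)$, this delivers smoothness on the whole interval.

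Finally I would convert (\ref{conj0}) into the expected-volume form (\ref{conj}). Stationarity of $\Phi$ under $\BP_{tF}$, applied to each fixed pair $(x,r)$, gives
\begin{align*}
&\BP_{tF}\{L \cap Z_\infty(\Phi + \delta_{(x,r)}) \ne \emptyset,\ L \cap Z_\infty(\Phi) = \emptyset\} \\
&\qquad = \BP_{tF}\{(L-x) \cap Z_\infty(\Phi + \delta_{(0,r)}) \ne \emptyset,\ (L-x) \cap Z_\infty(\Phi) = \emptyset\}.
\end{align*}
The bookkeeping identity $(L - x) \cap A \ne \emptyset \Leftrightarrow -x \in A \ominus L$, together with the change of variable $y = -x$ (preserving Lebesgue measure) and Fubini, then transforms the integral in (\ref{conj0}) into that in (\ref{conj}). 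No serious obstacle arises: the entire argument is a one-line reduction to Theorem~\ref{3diff} plus stationarity, and the only point requiring care is the Minkowski-difference manipulation, whose correctness can be cross-checked in the special case $L = \{0\}$, where both (\ref{conj0}) and (\ref{conj}) collapse to $\BE_{tF} \int |Z_\infty(\Phi + \delta_{(0,r)}) \setminus Z_\infty(\Phi)|\, F(dr)$.
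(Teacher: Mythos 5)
Your proposal is correct and follows essentially the same route as the paper: Theorem~\ref{3diff} is applied with the measures $tF$ and $G=F$ (the strict supercriticality $t_c(tF)<1$ being equivalent to $t>t_c(F)$), and (\ref{conj}) is then obtained from (\ref{conj0}) by stationarity plus the reflection/Minkowski-difference bookkeeping, which the paper phrases via the identity $|(L\ominus A)\setminus(L\ominus B)|=|(A\ominus L)\setminus(B\ominus L)|$ and you phrase via the change of variable $y=-x$ — the same computation.
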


\begin{proof}
  Let $L \subset \R^d$ be compact.  The infinite differentiability of
  $\theta_L(tF)$, and the formula (\ref{M18}) for
  $\frac{d^n}{dt^n} \theta_L(tF)$, follow from applying 
  Theorem \ref{3diff} to the measures $F^*$ and $G^*$ given by
  $F^* = tF$ and $G^* = F$; also (\ref{conj0}) follows as a special
  case of (\ref{M18}).  It remains to prove (\ref{conj}).  By
  stationarity, $T_x \Phi $ has the same distribution as $\Phi$ under
  $\BP_{tF}$, so the right side of (\ref{conj0}) equals
  \begin{align*}
    \BE_{tF}\iint\I\{(L-x)\cap
    Z_\infty(\Phi+\delta_{(0,r)})\ne\emptyset,(L-x)\cap
    Z_\infty(\Phi)=\emptyset\}\,dx\,F(dr)
\\
    = \BE_{tF}\iint\
\I\{x \in  (L\ominus Z_\infty(\Phi+\delta_{(0,r)}) ) \setminus
(L  \ominus Z_\infty(\Phi))\}\,dx\,F(dr),
  \end{align*}
  and then (\ref{conj}) follows from the fact that for any Borel sets
  $A,B,L$ we have
  $|(L \ominus A) \setminus (L \ominus B) | = |(A \ominus L) \setminus
  (B \ominus L) | $.
\end{proof}


\begin{remark}\label{r3.4}\rm 
  Making use of the Mecke identity \eqref{eq:mecke}, we can also
  rewrite (\ref{conj0}) as follows (see also \cite{Zue:92b}):
\begin{displaymath}
\frac{d}{dt}\theta_L(tF)=t^{-1} \BE_{tF} \int
\I\{L\cap Z_\infty(\Phi)\ne\emptyset, 
L\cap  Z_\infty(\Phi-\delta_{(x,r)})=\emptyset\}\,\Phi(d(x,r)).
\end{displaymath}
\end{remark}

\subsection{Bounds for the capacity functional}\label{sscaling}

Our next result provides a lower bound for the capacity functional of
the infinite cluster.
 This bound is linear
in the right neighbourhood of the critical value.

It is known for
lattice percolation models that the percolation function grows at
least linearly in the right neighbourhood of the threshold; see
\cite{ChayesCh86}, or
\cite{Gr99} and the references therein. 
Our result shows that this also holds for the spherical Boolean model.

\begin{theorem}\label{texponent} Let $b >0$.
Let $F \in \bM^b_1$ and let $L\subset\R^d$ be compact.
Set $t_c := t_c(F) $ and  $\alpha 
:=\BP_{t_c F}\{B_b\subset Z(\Phi)\}$.
  Then 
  \begin{align}
    \theta_L(tF)-\theta_L(t_cF) & \geq
    \frac {\alpha (t-t_c)\,
(1-\theta_L(tF)) }{ t},
\quad  t > t_c .
                                  \label{eq:rate1}
\end{align}
Furthermore,
\begin{align}
    \frac{\theta_L(tF)-\theta_L(t_c F)}{t-t_c} & \geq
    \frac{  \alpha (1-\theta_L(t_cF))}{t_c} + o(1)\quad\text{as}\
    t\downarrow t_c. \label{eq:rate2}
  \end{align}
\end{theorem}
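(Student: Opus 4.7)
The strategy is to establish the differential inequality
\begin{equation*}
\frac{d}{dt}\theta_L(tF) \geq \frac{\alpha(1-\theta_L(tF))}{t}, \qquad t>t_c,
\end{equation*}
which I will refer to as $(\star)$, and then derive both conclusions from it by elementary calculus. For the integration step, by Theorem~\ref{diff} the map $s\mapsto\theta_L(sF)$ is infinitely differentiable on $(t_c,\infty)$ and by \cite[Theorem~3.9]{MeesterRoy96} it is right-continuous at $t_c$, so $\theta_L(tF)-\theta_L(t_cF)=\int_{t_c}^t\tfrac{d}{ds}\theta_L(sF)\,ds$. Since $\theta_L(\cdot F)$ is non-decreasing, $(1-\theta_L(sF))/s\geq(1-\theta_L(tF))/t$ for $s\in[t_c,t]$, so $(\star)$ yields
\begin{equation*}
\theta_L(tF)-\theta_L(t_cF)\;\geq\;\alpha\!\int_{t_c}^t\!\frac{1-\theta_L(sF)}{s}\,ds\;\geq\;\frac{\alpha(t-t_c)(1-\theta_L(tF))}{t},
\end{equation*}
which is (\ref{eq:rate1}); dividing by $t-t_c$ and letting $t\downarrow t_c$, using right-continuity once more, gives (\ref{eq:rate2}).

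\emph{Reformulating the derivative.} Combining Theorem~\ref{diff} with the Mecke identity as in Remark~\ref{r3.4} yields
\begin{equation*}
\frac{d}{dt}\theta_L(tF) = \frac{1}{t}\,\BE_{tF}\!\int \I\{L\cap Z_\infty(\Phi)\neq\emptyset,\ L\cap Z_\infty(\Phi-\delta_z)=\emptyset\}\,\Phi(dz),
\end{equation*}
the integrand $N(\Phi)$ counting the grains of $\Phi$ whose removal disconnects $L$ from the (a.s.\ unique) infinite cluster. The $1/t$ factor here matches the one in $(\star)$, so the task reduces to proving the pivotal-count bound $\BE_{tF}[N(\Phi)]\geq\alpha\,\BP_{tF}\{L\cap Z_\infty=\emptyset\}$. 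Note that $N(\Phi)$ vanishes on $\{L\cap Z_\infty=\emptyset\}$, so the bound asserts that on the connected event there are, on average, enough pivotal grains to compensate for the disconnected event's probability by a factor of $\alpha$; this is consistent with the heuristic that near $t_c$ the infinite cluster is thinly connected and contains many bottlenecks.

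\emph{The key step, via Reimer.} I would establish the pivotal-count bound using the continuum Reimer inequality of \cite{GuRao99}, together with the stochastic monotonicity $\BP_{tF}\{B_b\subset Z(\Phi)\}\geq\BP_{t_cF}\{B_b\subset Z(\Phi)\}=\alpha$. The plan is to decompose the disconnection event $\{L\cap Z_\infty=\emptyset\}$ into two disjointly occurring sub-events: one realising the infinite cluster (whose existence is almost sure for $t>t_c$), and a disjoint one in which a nearby region admits a local covering by a ball of radius $b$ whose completion would create a pivotal grain bridging $L$ to the infinite cluster. Reimer's inequality converts the disjoint-occurrence decomposition into the required product lower bound, with the factor $\alpha$ arising from the probability of locally covering a ball of radius $b$. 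The main technical obstacle lies precisely in this step: identifying the correct geometric decomposition and verifying the disjointness hypothesis of Reimer in the continuum Poisson setting. Once the pivotal-count bound is in hand, the rest of the argument is the calculus outlined above.
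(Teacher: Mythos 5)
Your reduction of the theorem to the differential inequality $\theta_L'(t)\geq \alpha(1-\theta_L(tF))/t$, and the calculus/integration step (monotonicity plus right-continuity at $t_c$), coincide with the paper and are fine; via Remark~\ref{r3.4} the inequality is indeed equivalent to the pivotal-count bound $\BE_{tF}[N(\Phi)]\geq\alpha\,\BP_{tF}\{L\cap Z_\infty=\emptyset\}$. The gap is in the key step, which you only sketch, and the sketch as stated would not work. Reimer's inequality bounds the probability of a disjoint occurrence \emph{above} by the product of probabilities; it cannot ``convert a disjoint-occurrence decomposition into the required product lower bound,'' and decomposing the disconnection event $\{L\cap Z_\infty=\emptyset\}$ into sub-events one of which ``creates a pivotal grain bridging $L$ to the infinite cluster'' is not coherent (no such grain exists in a disconnected configuration). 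The actual route is in the opposite direction: one upper-bounds the finite-volume connection probability $\theta^n_L(t)=\BP_{tF}(J^n_L)$. By Menger's theorem (Lemma~\ref{pivlemma}), on $J^n_L$ either there are two disjoint paths from $L$ to $\R^d\setminus B_n$ --- a term your sketch never addresses, which is bounded by $(\theta^n_L(t))^2$ via the BK inequality and is essential, since on the connection event one may well have $N(\Phi)=0$, so no purely local ``add a pivotal grain'' argument can give $\BE[N]\geq\alpha(1-\theta_L)$ by itself --- or there is a unique \emph{last} pivotal grain; after Mecke, the last-pivotal event is contained in the disjoint occurrence of the pivotality event $E_{n,x,r}$ and the connection event $J^n_{B_r(x)}$, and Reimer bounds this above by $\BP(E_{n,x,r})\,\theta^n_{B_r}(t)$. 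Rearranging $\theta_L\leq\theta_L^2+t\,\theta_{B_b}\,\theta_L'$ then yields the differential inequality once one inserts $\theta_L(t)\geq\alpha\,\theta_{B_b}(t)$.

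That last comparison is where $\alpha$ really enters: it is a Harris--FKG argument (if $B_b(x)$, $x\in L$, is fully covered and meets $Z_\infty$, then $L$ meets $Z_\infty$), combined with monotonicity $\BP_{tF}\{B_b\subset Z\}\geq\BP_{t_cF}\{B_b\subset Z\}=\alpha$ --- not a Reimer step, so your proposal conflates two distinct inequalities used for different purposes. Finally, several technical ingredients needed to make this rigorous are absent from your plan: the finite-volume truncation $J^n_L$ and the passage $n\to\infty$ (which requires the exponential stabilization bound of Lemma~\ref{lint} to dominate the integrand in $(x,r)$ and apply dominated convergence, and Theorem~\ref{diff} to identify $\lim_n\iint\BP(E_{n,x,r})\,dx\,F(dr)$ with $\theta_L'$), and the verification, in the continuum Poisson setting of \cite{GuRao99}, that the two events genuinely occur disjointly (the paper does this with rational $(d+1)$-cubes around the witnessing points). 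As it stands, the proposal correctly identifies the target inequality but does not supply a viable argument for it.
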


Theorem \ref{texponent} is proved in Section \ref{texproof}.

The bounds~\eqref{eq:rate1}-\eqref{eq:rate2} also hold for the
integrated percolation functions $\int \theta_{B_r}(tF)\,F(dr)$; see
Remark~\ref{rem:pf}. 
For a given $F$,
an explicit numerical lower bound for the right-hand side of
 (\ref{eq:rate1}) can be established by using
the inequality
\begin{align*}
  1-\theta_L(tF)\geq \BP_{tF}\{L\cap Z=\emptyset\}=\exp\Big[-t
  \int |B_r\ominus L|\,F(dr)\Big]
\end{align*}
and applying a numerical estimation method for $t_c$
such as that in ~\cite{ZueSid:85b}, for example.  Also,
  it is not difficult to estimate
$\alpha$ (the probability that $B_b$ is fully covered) explicitly from
below.

\begin{remark}\label{remark4.4}\rm Although 
  the capacity functional $t \mapsto \theta_L(tF)$ is believed to be
 continuous at
  the critical value $t_c$, it is certainly not differentiable
  there.  Indeed, if it is continuous, then $\theta_L(t_cF)=0$ and the
  left-hand derivative of $t \mapsto \theta_L(tF)$ at $t_c$ 
vanishes. But Theorem
  \ref{texponent} implies that the right-hand derivative, if
it exists, is strictly
  positive.  
\end{remark}

\begin{remark}\label{remark4.3}\rm Given the common belief for
  discrete percolation (see \cite{Gr99}), one might conjecture that
  $\theta_L(tF)-\theta_L(t_cF)\sim (t-t_c)^\beta$ as $t\downarrow t_c$
  (at least in a logarithmic sense) for some {\em critical exponent}
  $\beta>0$. If this holds, Theorem \ref{texponent} implies $\beta\le 1$.
\end{remark}

\subsection{Percolation in a slab when $d \geq 3$}
\label{slabsec}

Given $K > 0$,
let $S(K)$ denote the slab $[0,K] \times \R^{d-1}$.
An important result of 
\cite{GM} says that for Bernoulli
 lattice percolation
 if the parameter $p$ is   
 supercritical in $\Z^d$, with $d \geq 3$,
then for sufficiently
large $K$ the parameter $p$ is
also supercritical for   
 the model restricted to a sufficiently
large slab in $\Z^d$.  

To prove our results 
 in the case $d \geq 3$,
 we need to adapt this
result to the Boolean model. In
the case where the balls have fixed radius,
this was done in \cite{Tanemura:93}, 
and we now describe an extension to balls
of random radius. This could potentially be of use elsewhere.

Given $F \in \bM$, let us denote by $\Phi_F$ a Poisson
process in $\BX$ with distribution $\Pi_F$,
i.e. with $\BP \{ \Phi_F \in \cdot\} =\BP_F\{\Phi \in \cdot\}$.
Given also any measurable function $f:\R_+ \to \R_+$,  let us denote
by $\Phi_{F,f(\rho)}$ the image of $\Phi_F$ under
the mapping $\sum_i \delta_{(x_i,r_i)} \mapsto \sum_i \delta_{(x_i,f(r_i)) }$. 
Thus $\Phi_{F,f(\rho)}$ has the same distribution
as $\Phi_{F \circ f^{-1}}$;  it will be convenient
for us to mention $\rho$ in
  the notation,
  representing the radius of a ball in the system.
For $\varphi \in \bN$ and
$A \in \cB(\BX)$ let $\varphi|_A$ denote the restriction of $\varphi$
to $A$, i.e.\  $\varphi|_A (\cdot) = \varphi(\cdot \cap A)$.  Finally,
for $B \subset \R^d$ write $[B]$ for $B \times \R_+$.

\begin{theorem}
\label{slabthm}
Suppose $d \geq 3$ and let $F \in \bM^\sharp$ with $t_c(F) <1$.  Then
there exists $K < \infty$ such that
$\BP_{F} \{ Z_\infty ( \Phi |_{[S(K)] }) \neq \emptyset \} = 1$, 
 and
\begin{align}
\inf_{x \in S(K)} \BP_F \{
x \in Z_\infty(\Phi|_{[S(K)]} ) \}  >0.
\label{slab1}
\end{align}
\end{theorem}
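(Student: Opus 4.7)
The plan is to adapt the sprinkled-renormalisation argument of Grimmett and Marstrand, as carried out by Tanemura \cite{Tanemura:93} in the fixed-radius continuum case, to bounded random radii. Fix $b$ with $F \in \bM^b$ and pick $\alpha \in (t_c(F),1)$; write $\Phi = \Phi_1 + \Phi_2$ as a superposition of independent Poisson processes with intensity measures $\alpha\,dx\,F(dr)$ and $(1-\alpha)\,dx\,F(dr)$ respectively. Then $\Phi_1$ alone produces a supercritical Boolean model, the ``backbone'', while $\Phi_2$ serves as an independent sprinkling reservoir. This split is precisely what makes the strict inequality $t_c(F)<1$ useful.

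Partition $\R^d$ into cubes $B_n := nL + [0,L]^d$ for $n \in \Z^d$, with $L \gg b$ to be chosen. Declare $B_n$ \emph{good} (event $G_n$) when $Z(\Phi|_{[B_n^+]})$, with $B_n^+ := (n-2)L + [0,5L]^d$, contains a single connected cluster $C_n$ touching each face of $B_n$ in a set of diameter at least $L/4$, and such that for every $\ell^\infty$-neighbour $n'$ of $n$ the analogously defined $C_{n'}$ is joined to $C_n$ inside $Z(\Phi|_{[B_n^+ \cup B_{n'}^+]})$. Because all radii lie in $[0,b]$ and $L \gg b$, the events $G_n$ and $G_{n'}$ are independent whenever $|n-n'|_\infty$ exceeds a fixed constant, yielding finite-range dependence. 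The crux is to show that $\BP_F(G_n) \to 1$ as $L \to \infty$: following Grimmett-Marstrand, one first uses $\Phi_1$ (supercriticality, uniqueness of the infinite cluster, ergodicity, and FKG for monotone Poisson events) to produce face-touching clusters with probability tending to $1$, and then uses the independent $\Phi_2$ to bridge any short remaining gaps, so that the pieces merge into a single $C_n$ realising $G_n$. Boundedness of radii ensures each gap occurs at scale $O(b)$, so a single sprinkled ball closes it with probability bounded away from $0$ uniformly in the surrounding configuration.

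Once $\BP_F(G_n) \to 1$, the Liggett-Schonmann-Stacey stochastic-domination theorem gives that the indicator family $(\I\{G_n\text{ occurs}\})_{n \in \Z^d}$ dominates i.i.d.\ Bernoulli$(q)$ site variables on $\Z^d$, with $q$ as close to $1$ as we wish. Taking $q$ above the slab-percolation threshold of $\Z^d$ (finite for $d \geq 3$ by \cite{GM}) yields an infinite cluster of good boxes in the slab $\{0,\dots,N-1\} \times \Z^{d-1}$ for some finite $N$, which by the definition of $G_n$ produces an infinite Boolean cluster inside $Z(\Phi|_{[S(K)]})$ with $K := (N+4)L$. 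The uniform bound \eqref{slab1} then follows from stationarity of $\Phi|_{[S(K)]}$ in the last $d-1$ coordinates together with lower semicontinuity of the map $x_1 \mapsto \BP_F\{(x_1,0,\dots,0) \in Z_\infty(\Phi|_{[S(K)]})\}$ on the compact interval $[0,K]$. The principal obstacle is the sprinkling step in the second paragraph: one must quantify how $\Phi_2$ bridges the $\Phi_1$-clusters inside $B_n^+$, which for fixed radius is done in \cite{Tanemura:93} and, thanks to the uniform bound on radii, is expected to extend here with only routine modifications of Tanemura's estimates.
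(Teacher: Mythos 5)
There is a genuine gap, and it sits exactly where you place all the weight: the claim that $\BP_F(G_n)\to 1$ as $L\to\infty$. The event $G_n$ asks that the restriction of the Boolean model to a box of side $5L$ contain a (unique) cluster touching every face and that this cluster hook up with the corresponding clusters of neighbouring boxes. In $d\ge 3$ this is precisely the ``local uniqueness'' statement whose proof in the supercritical regime is a \emph{consequence} of the Grimmett--Marstrand theorem (or of slab percolation), not of supercriticality, ergodicity, uniqueness of the infinite cluster and FKG alone; asserting it and then feeding it into Liggett--Schonmann--Stacey domination is circular, since it assumes the hard part of the very result you are proving. Moreover, the specific mechanism you offer to merge clusters is incorrect: boundedness of the radii by $b$ bounds the size of individual grains, but it does not force distinct face-touching clusters of a box of side $L\gg b$ to lie within distance $O(b)$ of one another --- they can be separated by vacant regions of diameter comparable to $L$, so ``a single sprinkled ball closes the gap'' fails. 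In Grimmett--Marstrand (and in Tanemura's continuum version) the sprinkling is not used to patch a static one-box uniqueness event; it is used dynamically, to connect \emph{seeds} produced on the boundary of an explored region to fresh, unexplored sprinkled points, and this dynamic construction is the content of the theorem. A secondary, smaller issue: the lower semicontinuity in $x_1$ of $x\mapsto\BP_F\{x\in Z_\infty(\Phi|_{[S(K)]})\}$ invoked for (\ref{slab1}) is not justified (the paper instead argues as in Lemma 10.8 of \cite{Pe03}).

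For contrast, the paper's route avoids having to reprove any continuum renormalisation estimate. Using $t_c(F)<1$ and continuity/scaling, it first reduces to radii bounded away from $0$, then rounds the radii down to finitely many values and snaps the centres to a fine grid, shrinking radii by small factors at each step so that percolation survives at intensities $t_3<t_4<t_5<1$; the resulting model is a finite-type Bernoulli site percolation on $\Z^d\times\{1,\dots,\kappa\}$, strictly supercritical because the true intensity $1$ exceeds $t_5$. The Grimmett--Marstrand argument of \cite{GM} is then adapted directly to that lattice model (the paper indicates the needed changes to Lemmas 2--4 of \cite{GM}), and the conclusion is pulled back to the continuum slab. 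If you want to keep your sprinkling decomposition $\Phi=\Phi_1+\Phi_2$, you would still have to either carry out a Tanemura-style dynamic renormalisation in the continuum with random radii (quantifying the seed-to-sprinkling connection step, which is not a matter of $O(b)$ gaps), or discretise as the paper does; as written, the good-box estimate is assumed rather than proved.
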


\begin{proof}
By assumption,
$F(\{0\})=0$. 
  By \cite[Theorem 3.7]{MeesterRoy96}, for any 
$b >0$ the value of $ t_c(F')$ depends
  continuously (in the weak topology) on $F' \in \bM_1^b$, so
  one can show that there exists $a > 0$ with $t_c(F|_{[a,\infty)}) < 1$,
 where $F|_{[a,\infty)}$ denotes the restriction of the
measure $F$
to  the interval $[a,\infty)$.
Since there exist coupled Poisson point  processes
 $\Phi,\Phi'$ having distribution $\Pi_F$ and 
$\Pi_{F|_{[a,\infty)}}$ respectively with
$\Phi' \leq \Phi$ almost surely,  it suffices
to prove (\ref{slab1}) using the measure
$F|_{[a,\infty)}$ rather than $F$. In other
words,
we may assume without loss of generality that there exists
  $a>0$ with $F([0,a))=0$, and then by scaling (see
  \cite{MeesterRoy96}) we can (and now do) assume $a = 1$.

  For $i=3,4,5$ choose $t_i$ with $t_c(F) < t_3 < t_4 < t_5 < 1$. Then
  $Z_\infty(\Phi_{t_3F } ) \neq \emptyset$ almost surely, so that by
  scaling, there exists $\delta >0$ with $1/\delta \in \N$ such that
  also
  $Z_\infty(\Phi_{t_4F,(1-\delta)\rho }) \neq \emptyset$ almost
  surely, and therefore also
  $Z_\infty(\Phi_{t_4F,\rho-\delta }) \neq \emptyset$
  since almost surely $\rho \geq 1$  so
  that $(1- \delta)\rho \leq \rho -\delta$.

  Set
  $\lfloor \rho \rfloor_\delta := \delta \lfloor \rho/\delta \rfloor$,
  i.e.\ the value of $\rho$ rounded down to the nearest integer
  multiple of $\delta$. Then
  $\lfloor \rho \rfloor_\delta \geq \rho -\delta$, so that
  $Z_\infty(\Phi_{t_4F, \lfloor \rho \rfloor_\delta}) \neq
  \emptyset$
  almost surely.  Note that since $1/\delta \in \N$ we have
  $\lfloor \rho \rfloor_\delta \geq 1$ almost surely.
  %
  By further scaling, we can (and do) choose $\eps >0$ such
  that
  $Z_\infty(\Phi_{t_5F,(1- 2 \eps) \lfloor \rho \rfloor_\delta})
  \neq \emptyset$ almost surely.

  Now let $\eta = \eps / (2d)$.  Divide $\R^d$ into half-open
  cubes denoted $Q_z, z\in \Z^d$, where $Q_z$ has side length $\eta$
  and is centered at $\eta z$.  For $x \in \R^d$ let $\langle x\rangle_\eta$ denote
  the point at the center of the cube $Q_z$ containing $x$. For
  $\varphi = \sum_i \delta_{(x_i,r_i)} \in \bN$, let
  $\langle \varphi \rangle_\eta := \sum_i \delta_{(\langle x_i\rangle_\eta,r_i) }$ (this
  counting measure can have multiplicities).  Since
  $|\langle x\rangle_\eta -x|\leq d \eta/2$ for all $x \in \R^d$, and since $\eta $
  is chosen so that $d \eta < \eps $, we have that
  $Z_\infty ( \langle \Phi_{t_5F,(1- \eps) \lfloor \rho \rfloor_\delta}
  \rangle_\eta) \neq \emptyset $ almost surely.

  For $t>0$, the occurrence of $Z_\infty^{} ( \langle \Phi_{tF,(1- \eps)
    \lfloor \rho \rfloor_\delta} \rangle_\eta) \neq
  \emptyset$ is equivalent to existence of an infinite cluster in the
  following Bernoulli site percolation model on $\Z^d
  \times \{1,2,\ldots,\kappa\}$ for some $\kappa \in
  \N$.  Let
  $r_1,\ldots,r_\kappa$
  denote the possible values for $\lfloor
  \rho \rfloor_\delta$ (where $\rho$ has distribution
  $F(\cdot)/|F|$),
  listed in increasing order.  For $1
  \leq i \leq \kappa$ set $\pi_i := \BP \{ (1-\eps) \lfloor \rho
  \rfloor_\delta = r_i \}$.  For $y,z \in \Z^d$ and $i,j \in
  \{1,\ldots,\kappa\}$, put $(y,i) \sim (z,j)$ if and only if $|\eta y
  - \eta z| \leq r_i + r_j$.  Let each site
  $(z,i)$
  be occupied with probability $p_{t,i}
  $, where we put $p_{t,i} = 1 - \exp(- t |F| \eta^d
  \pi_i)$, independent of the other sites.  Note that $(p_{1,i})_{i
    \leq \kappa}
  $ is supercritical, in that it strictly exceeds (in each entry) the
  vector $(p_{t_5,i})_{i \leq \kappa}$ which also percolates.

  By the result of \cite{GM} adapted to this site percolation model, 
there is a choice of $K$ such that
  $Z_\infty^{ } (\langle\Phi_{F,(1- \eps) \lfloor \rho \rfloor_\delta} |_{
    [S(K) ]}\rangle_\eta) \neq \emptyset $
  almost surely.  Therefore since $d \eta \leq \eps $ we have
  $ Z_\infty^{ } (\Phi_{F} |_{[S(K)
    ]}) \neq \emptyset $
  almost surely.  We may argue similarly to obtain (\ref{slab1}),
  following the proof of Lemma 10.8 in \cite{Pe03} with obvious
  modifications.

  Let us describe how to adapt some of the steps of \cite{GM} to the
  site percolation model above.  In Lemma 2 of \cite{GM}, we may
  replace $(1-p)^t$ by $(1- \max_i p_{\lambda_5,i})^t$.

  In Lemma 3 of \cite{GM}, instead of the box $B_m$ consider the box
  $B_{2m \lceil r_\kappa \rceil }$. Also the bound $(1-p)^{dk}$ would
  be replaced by $(1- \max_i p_{\lambda,i})^{-\kappa k B}$ where $B$
  denotes the number of sites of $\Z^d$ at distance at most
  $2 r_\kappa $ from the origin.

  In Lemma 4 of \cite{GM} we let $T(n)$ denote the set of sites
  $(z,i)$ lying in
  $B_n \cap \partial((-\infty,n] \times \Z^{d-1}) \times \{1,\ldots,
  \kappa\} $ with all coordinates of $z$ being nonnegative.
\end{proof}

\section{A preparatory result}
\label{secpreparatory}

In this section, we present further notation followed by a
key lemma (Lemma \ref{lint}) 
 which will be used repeatedly in the proof of
Theorems \ref{texponent} and \ref{3diff}.
For $A\subset\R^d$ and $\phi\in\bN$,
let $Z_A(\varphi)$ be
the union of all the clusters of $Z(\varphi)$ which have non-empty
intersection with $A$. In other words, 
 set
\begin{equation}
Z_A (\varphi) 
  := \bigcup_{i: x_i \leftrightarrow_\varphi A }  B_{r_i}(x_i), 
\quad \text{for}\ \varphi =  \sum_i \delta_{(x_i,r_i)} , 
\label{ZAdef}
\end{equation}
where $x \leftrightarrow_\varphi A$ means that
$x$ lies in a component of 
$Z(\varphi)$ which intersects
$A$.

 Recall that $\varphi|_A$ denotes the restriction of $\varphi\in\bN$
 to $A \in \cB(\BX)$ and $[B]=B \times \R_+$. 
Let $b \in (0,\infty)$. Given $\varphi, \varphi' \in \bN^b$,
after introducing
\begin{align*}
  R'_{K,b}(\varphi,\varphi') & := \min \{nb: n \in \N,
  Z_K(\varphi + \varphi') \subset B_{(n-1)b} \}; \\
 R''_{K,b} (\varphi,\varphi')
	& := \min \{ nb:n \in \N, 
 Z_K(\varphi
 + \varphi'|_{[B_{nb}]})
 \cap Z_\infty(\varphi) \neq \emptyset \}\\
	& \ = 
  \min \{ nb:n \in \N, K \cap Z_\infty(\varphi + \varphi'|_{[B_{nb}]})
 \neq \emptyset \},
\end{align*}
we define the
following `radius of stabilization':
\begin{equation}
  \label{rstabdef}
  R_{K,b} (\varphi,\varphi') :=
  \min(R'_{K,b} (\varphi,\varphi'),
  R''_{K,b} (\varphi,\varphi') ).
\end{equation}
Here we use the convention $\min(\emptyset):= + \infty$ if needed.
Also, below we shall sometimes write
 simply $R_K$ for $R_{K,1}$ and likewise
$R'_K, R''_K$ for $R'_{K,1}, R''_{K,1}$
respectively.


It is straightforward to verify 
that the stopping radius
property of $R_{K,b} $ (with respect to $\phi'$) holds, i.e.\  for each $n \in \N$,
if $R_{K,b}(\varphi,\varphi') = nb$ then
for any $\psi \in \bN_b$ we have
\begin{equation}
 R_{K,b}(\varphi, \varphi'|_{[B_{nb}]} + \psi|_{[\R^d \setminus B_{nb}]}) 
 = R_{K,b}(\varphi, \varphi'|_{[B_{nb}]}). 
 \label{eqstop}
\end{equation}
The notions of stabilization and of stopping radius have proved
fruitful in many other stochastic-geometrical contexts; see, for
example,~\cite{PenEJP}.

 For $F,G \in \bM$ we denote by 
$\Phi_F, \Phi'_G$ a pair of independent
 Poisson process with distribution
$\Pi_F$ and $\Pi_G$, respectively,
 i.e.\ with $\BP\{\Phi_F \in \cdot\,\} = \BP_F\{\Phi \in \cdot\,\}$
and
$\BP\{\Phi'_G \in \cdot\,\} = \BP_G\{\Phi\in\ \cdot\,\}$.
Recall from Section \ref{slabsec} the definition of 
the slab $S(K), K >0$.


\begin{lemma}
 \label{condcross}
 Suppose $d \geq 3$, $ b >0$ and $F \in \bM^b$,
 $G \in \bM^b_{\pm}$ and $\eps \in (0,1)$ are such that
 $ F + \eps G \in \bM$, and $ 1-\eps > t_c (F) $.  Then there
 exists $K \in (2b,\infty)$ and $\gamma \in (0,1/2)$ such that for all
 Borel $A \subset \R^d$, and
 $h \in (0, \eps^2/2]$,
\begin{multline}
\max\bigl(  \BP\{
  Z_A(\Phi_{F + h G}|_{[S(K)]}  ) = \emptyset\} 
, \\
\BP \{
Z_\infty (\Phi_{(1-\eps)F }|_{[S(K) ]} )
\cap Z_A( \Phi'_{\eps F + h G}|_{[S(K) ]} ) \neq \emptyset  \} \bigr)
 \geq \gamma.
\label{0701b}
\end{multline}
\end{lemma}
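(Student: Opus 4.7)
My plan is to exploit the Poisson superposition identity $F + hG = (1-\eps)F + (\eps F + hG)$, which decomposes $\Phi_{F+hG}$ as an independent sum. The summand $\eps F + hG$ is itself a positive measure: since $F + \eps G \in \bM$, the Jordan decomposition of $G$ satisfies $G^- \leq F/\eps$, and $h \leq \eps^2/2$ then gives $\eps F + h G \geq \eps F - h G^- \geq \eps F/2$. Consequently $\Phi_{F+hG}|_{[S(K)]}$ has the law of $\psi_1 + \psi_2$, where $\psi_1 := \Phi_{(1-\eps)F}|_{[S(K)]}$ and $\psi_2 := \Phi'_{\eps F + hG}|_{[S(K)]}$ are independent, and I would work directly with this pair.

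The first step is to fix the slab width using Theorem~\ref{slabthm}. Since $t_c(F) < 1-\eps$ yields $t_c((1-\eps)F) < 1$, the theorem applied to $(1-\eps)F$ supplies a $K > 2b$ such that $\psi_1$ almost surely has an infinite cluster and $\beta := \inf_{x \in S(K)} \BP\{x \in Z_\infty(\psi_1)\} > 0$.

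Next, define $E_1 := \{Z_A(\psi_1 + \psi_2) = \emptyset\}$ and $E_2 := \{Z_\infty(\psi_1) \cap Z_A(\psi_2) \neq \emptyset\}$. These are disjoint, since $E_1$ forces every ball of $\psi_1 + \psi_2$ to miss $A$ and thus $Z_A(\psi_2) = \emptyset$. Because $Z_A(\phi) = \emptyset$ iff no ball of $\phi$ meets $A$, the Poisson void formula together with independence gives $\BP(E_1) = q_1 q_2$, where $q_i := \BP\{\text{no ball of }\psi_i\text{ meets }A\}$. For $E_2$, whenever $Z_A(\psi_2) \neq \emptyset$ I would pick (measurably, by ordering the atoms of $\psi_2$ lexicographically) a ball-center $y(\psi_2) \in Z_A(\psi_2) \cap S(K)$; conditioning on $\psi_2$, the slab bound and independence of $\psi_1$ from $\psi_2$ yield $\BP\{y(\psi_2) \in Z_\infty(\psi_1) \mid \psi_2\} \geq \beta$ on $\{Z_A(\psi_2) \neq \emptyset\}$, whence $\BP(E_2) \geq \beta(1 - q_2)$.

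I would conclude by a dichotomy on $q_2$. If $q_2 \leq 1/2$, then immediately $\BP(E_2) \geq \beta/2$. Otherwise, writing $-\log q_2 = \eps I_F(A) + h I_G(A)$ for the obvious integrals of $\I\{B_r(x) \cap A \neq \emptyset\}$ against the relevant measures on $[S(K)]$, the bound $G^- \leq F/\eps$ combined with $h \leq \eps^2/2$ gives $h I_{G^-}(A) \leq \eps I_F(A)/2$; hence $q_2 > 1/2$ forces $I_F(A) < (2\log 2)/\eps$, so that $q_1 \geq 2^{-2(1-\eps)/\eps}$ and $\BP(E_1) = q_1 q_2 \geq 2^{-2(1-\eps)/\eps - 1}$. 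Taking $\gamma := \min(\beta/2,\ 2^{-2(1-\eps)/\eps-1}) \in (0, 1/2)$ closes the argument. The main delicate point I expect is the signed-measure bookkeeping in the $q_2 > 1/2$ case; uniformity over arbitrary Borel $A$ (including unbounded $A$, for which both $q_i$ vanish and $\BP(E_2) \geq \beta$ is immediate) then follows.
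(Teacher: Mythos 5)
Your proof is correct, and its skeleton matches the paper's: Theorem \ref{slabthm} applied to $(1-\eps)F$ supplies the slab width $K$ and the uniform lower bound $\beta$, the superposition $\Phi_{F+hG}\overset{d}{=}\Phi_{(1-\eps)F}+\Phi'_{\eps F+hG}$ is the same decomposition, and your final step (a grain centre of the nonempty set $Z_A(\Phi'_{\eps F+hG}|_{[S(K)]})$ lies in $S(K)$, hence by independence lies in $Z_\infty(\Phi_{(1-\eps)F}|_{[S(K)]})$ with conditional probability at least $\beta$) is exactly how the paper concludes. Where you genuinely differ is the intermediate comparison. The paper pivots on $\BP\{Z_A(\Phi_{F+hG}|_{[S(K)]})\ne\emptyset\}\ge 1/2$ and transfers this to the component $\eps F+hG$ by a Poisson thinning argument (Bernoulli's inequality for the number of grains hitting $A$, together with $\eps F+hG\ge(\eps/2)(F+hG)$), yielding $\gamma=\gamma_1\eps/4$. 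You instead note that $\{Z_A(\phi)=\emptyset\}$ is a void event, so $\BP(E_1)=q_1q_2$ factorizes exactly, and run the dichotomy on $q_2$, using $\eps F+hG\ge(\eps/2)F$ (from $G^-\le F/\eps$ and $h\le\eps^2/2$) to force $I_F(A)$ small when $q_2>1/2$ and hence $q_1$ bounded below; this buys fully explicit constants, $\gamma=\min(\beta/2,\,2^{-2(1-\eps)/\eps-1})$, at the cost of a void-functional computation that the paper's thinning trick avoids. Two small repairs: your parenthetical that both $q_i$ vanish for unbounded $A$ is false in general (take $A$ far from the slab, where $q_1=q_2=1$), though harmless since the dichotomy covers every Borel $A$; and when $I_F(A)=\infty$ the identity $-\log q_2=\eps I_F(A)+hI_G(A)$ risks $\infty-\infty$, so state the bound directly through the positive measure $\eps F+hG\ge(\eps/2)F$ (then $q_2=0$ and you are in the first branch). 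Finally, like the paper, you implicitly enlarge the $K$ from Theorem \ref{slabthm} to exceed $2b$; this is fine but deserves the one-line remark that (\ref{slab1}) is preserved under widening the slab (monotonicity plus translating the thinner slab inside the thicker one).
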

\begin{proof}
Choose $K$ as in Theorem \ref{slabthm}.
 Assume without loss of generality that $K \geq 2b$.

Suppose
 $\BP \{
  Z_A (\Phi_{F + h G}|_{[S(K)]}  ) \neq \emptyset \} \geq
 1/2$ (otherwise (\ref{0701b}) is immediate). 
Since $\BP\{ Y > 0 \} \geq p \BP \{ X > 0 \} $ for
any $p \in (0,1) $ and Poisson variables $X,Y$ with
$\BE Y = p \BE X$ (by Bernoulli's inequality), 
 also
 $\BP \{
  Z_A (\Phi_{(\eps/2)(F + h G)}|_{[S(K)]}  ) \neq \emptyset \} \geq
 \eps/4$. 
For $0 \leq h \leq \eps^2 /2$, we have
\begin{align*}
\eps F + hG - (\eps/2) (F+hG) = (\eps/2)
[ F + (2 h/ \eps) (1- \eps/2) G] \in \bM. 
\end{align*}
Hence, also
$\BP \{ Z_A (\Phi'_{\eps F + h G}|_{[S(K) ]} ) \neq \emptyset \} \geq
\eps/4$.
Given $ Z_A (\Phi'_{\eps F + h G}|_{[S(K) ]} ) \neq \emptyset$, the
set $Z_A( \Phi'_{\eps F + h G}|_{[S(K) ]} ) $ has non-empty
intersection with $S(K)$, and therefore by Theorem \ref{slabthm} and
our choice of $K$, the conditional probability that this set
intersects with $Z_\infty(\Phi_{(1-\eps) F}|_{[S(K) ]} )$ is bounded
below by a strictly positive constant $\gamma_1$.  Hence we have
(\ref{0701b}) with $\gamma = \gamma_1 \eps/4$.
\end{proof}

Recall from Section \ref{secprelim} that $ \bM^\sharp$ denotes
the class of measures on $(0,\infty)$ with bounded support.
We now give the main result of this section.
\begin{lemma}\label{lint} 
Let $b \in (0,\infty)$, and
 suppose that $F \in \bM^b$, $G \in
\bM^b_{\pm}$, and
$\eps \in (0,1)$ are such that $t_c(F) < 1- \eps$ and $F+ \eps G$ is a
measure. Then for any compact $L \subset \R^d$, we have
\begin{equation}
  \limsup_{n \to \infty} \sup_{
  0 \leq h \leq \eps^2/2 }
  n^{-1}  \log \BP
  \{ R_{L,b}(\Phi_{(1 -\eps) F},\Phi'_{\eps F + h  G } )
  > n \} < 0. 
\label{MPL1}
\end{equation}
Also, with $0$ denoting the zero
measure,
\begin{equation}\label{MPL2}
 \limsup_{n \to \infty}
  n^{-1}  \log
  \BP  \{ nb <  R_{L,b}(0,\Phi_{F} ) < \infty  \} < 0. 
\end{equation}
\end{lemma}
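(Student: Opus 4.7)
The plan is a Peierls-type shell argument in which Lemma \ref{condcross} serves as the single-shell estimate. By Poisson superposition, write $\Phi_{F+hG} \stackrel{d}{=} \varphi + \varphi'$ where $\varphi := \Phi_{(1-\eps)F}$ and $\varphi' := \Phi'_{\eps F + hG}$ are independent. Fix $b$ so that $F, G$ are supported in $[0,b]$, and invoke Lemma \ref{condcross} to produce constants $K > 2b$ and $\gamma \in (0,1/2)$ for which the dichotomy there holds uniformly over Borel sets $A$ and $h \in [0, \eps^2/2]$. A key monotonicity is that $Z_\infty(\varphi|_{[S]}) \subseteq Z_\infty(\varphi)$ for every slab $S$, so avoidance of the full $Z_\infty(\varphi)$ forces avoidance of every slab-local infinite cluster of the $(1-\eps)F$-subprocess.

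If $R_{L,b}(\varphi,\varphi') > nb$, then some cluster of $Z(\varphi^{\fin}+\varphi')$ meets $L$, contains a point outside $B_{(n-1)b}$, and avoids $Z_\infty(\varphi)$; after a union bound over the $2d$ axis half-spaces, it suffices to control the probability that the cluster reaches $\{x_1 > (n-1)b/\sqrt d\}$. Slice the corresponding range of $x_1$ into slabs $S_j := [jK,(j+1)K) \times \R^{d-1}$, and select every other one. Because $K > 2b$, balls centered in $S_j$ cannot reach $S_{j\pm 2}$, so the restrictions $\Phi|_{[S_{2j}]}$ are mutually independent as $j$ varies. The cluster must cross each selected slab; I would explore it slab by slab and, for each $j$, let $A_j$ denote the (random) trace of the cluster at the inner face of $S_{2j}$, which is measurable with respect to the configuration outside $[S_{2j}]$ and hence independent of $\Phi|_{[S_{2j}]}$.

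Now apply Lemma \ref{condcross} with this $A_j$ inside the translated copy of $S(K)$: with conditional probability at least $\gamma$, either $Z_{A_j}(\Phi_{F+hG}|_{[S_{2j}]}) = \emptyset$, so no crossing of $S_{2j}$ is possible, or the crossing meets $Z_\infty(\varphi|_{[S_{2j}]}) \subseteq Z_\infty(\varphi)$, contradicting the cluster being $Z_\infty$-free. Either outcome blocks the bad event. Iterating this conditional bound over the $\Theta(n)$ selected slabs yields $(1-\gamma)^{\Theta(n)}$, and a $2d$-fold directional union bound then gives \eqref{MPL1} with negative lim sup uniform in $h \in [0,\eps^2/2]$.

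For \eqref{MPL2}, couple $\Phi_F = \Phi_{(1-\eps)F} + \Phi'_{\eps F}$ by Poisson superposition. A finite cluster of $Z(\Phi_F)$ meeting $L$ and extending past $B_{(n-1)b}$ avoids $Z_\infty(\Phi_F) \supseteq Z_\infty(\Phi_{(1-\eps)F})$, so contains no ball centered in $Z_\infty(\Phi_{(1-\eps)F})$, and is therefore contained in a single cluster of $Z(\Phi_{(1-\eps)F}^{\fin} + \Phi'_{\eps F})$ with the same extent and the same avoidance. Hence $\BP\{R_{L,b}(\Phi_F, 0) > n\} \leq \BP\{R_{L,b}(\Phi_{(1-\eps)F}, \Phi'_{\eps F}) > n\}$, and \eqref{MPL1} with $G = 0$, $h = 0$ delivers \eqref{MPL2}. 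The main technical obstacle is the slab-wise independence bookkeeping: defining the exploration carefully enough that the random set $A_j$ fed into Lemma \ref{condcross} is genuinely independent of $\Phi|_{[S_{2j}]}$, and that the conditional lower bound $\gamma$ can be multiplied cleanly along the exploration.
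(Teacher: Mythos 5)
Your $d \geq 3$ argument follows essentially the paper's route (slab decomposition with Lemma \ref{condcross} as the single-shell estimate, an iterated $(1-\gamma)$ bound, and a $2d$-directional union bound), but two genuine gaps remain. First, Lemma \ref{lint} is stated, and needed, for all $d \geq 2$, whereas Lemma \ref{condcross} --- and Theorem \ref{slabthm} behind it --- is only available for $d \geq 3$; percolation in a strip of fixed width fails in the plane, so your scheme cannot be made to work there. The paper treats $d=2$ by a separate argument: exponential decay (in $a$) of the probability that $Z(\Phi_{(1-\eps)F})$ fails to cross a $3a\times a$ rectangle, and then annulus circuits of $Z_\infty(\Phi_{(1-\eps)F})$ which confine $Z_L(\Phi_{(1-\eps)F},\Phi'_{\eps F+hG}|_{[B_u]})$ to a bounded box. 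Your proposal is silent on this case.

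Second, the step you yourself flag as ``the main technical obstacle'' is exactly where the work lies, and the fix you sketch is flawed: the trace $A_j$ of the actual cluster at the inner face of $S_{2j}$ is \emph{not} measurable with respect to the configuration outside $[S_{2j}]$, because the cluster may reach that face only via excursions through balls centered inside the slab; so $A_j$ as you define it depends on $\Phi|_{[S_{2j}]}$, and neither the conditional application of Lemma \ref{condcross} nor the multiplication of the $\gamma$'s along the exploration is justified (thinning to every other slab does not repair this measurability problem). The paper resolves it by replacing the trace with the cluster grown from the half-space-restricted processes, $W_{u,n}=Z_{\{x\}}(\varphi|_{[H_n]},\varphi'|_{[H_n\cap B_u]})$, which is measurable with respect to the half-space $\sigma$-field $\cF_n$, and then running the recursion $h_{u,n}\le\prod_m f_{u,m}g_{u,m}$ together with $\BE[V_{u,n}\mid\cF_n]\le 1-\gamma$ and the tower property over consecutive slabs (after a Mecke-formula reduction from $L$ to single-point clusters, which is a convenience rather than a necessity). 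Until an exploration with this filtration structure is actually set up, the exponential bound in \eqref{MPL1} is not established. Your reduction of \eqref{MPL2} to \eqref{MPL1} via $Z_L(\Phi_F,0)\subset Z_L(\Phi_{(1-\eps)F},\Phi'_{\eps F})$ is correct and matches the paper.
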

The $\eps^2$ in the range of $h$ in
 (\ref{MPL1}) arises because we need $h \leq \eps^2$ to guarantee
that $\eps F + hG$
is a measure. The fact that it is $\eps^2/2$ rather than $\eps^2$
 in (\ref{MPL1}) is an artefact of the proof. 

\begin{proof}[Proof of Lemma~\ref{lint}] 
Let $F,G,\eps$ be as in the statement of Lemma \ref{lint}.
First suppose  $d=2$.
Using Corollary 4.1 of \cite{MeesterRoy96} as a starting-point,
we can adapt  the proof of Lemma~10.5 of \cite{Pe03}  to random 
radius balls, thereby  showing that the probability
that
$Z(\Phi_{(1- \eps)F})$ fails to cross the rectangle $[0,3a] \times [0,a]$
decays  exponentially in $a$.

Given $a >0$, specify a sequence $D_1(a),D_2(a),\ldots$ of rectangles of
aspect ratio 3, alternating between horizontal and vertical rectangles,
 with $D_1(a) = [0,3a] \times [0,a]$,
 and with $D_{n}(a)$ crossing $D_{n+1}(a)$ the
short way for each $n$.  By the union bound and the exponential decay
just mentioned, the probability that 
for some $n$ there is no long-way crossing of $D_n(a)$ in 
$Z(\Phi_{(1- \eps) F})$ decays exponentially in $a$.  Hence  
the probability that 
$Z_\infty(\Phi_{(1-\eps)F})$ fails to include a
 long-way  crossing of $[0,3a] \times [0,a]$
is exponentially decaying in $a$. Likewise for the vertical rectangle
$[0,a] \times [0,3a]$.

Let $E_a$ denote the event that $Z_\infty(\Phi_{(1-\eps)F})$ includes
long-way crossings of each of the rectangles
$[-3a/2,-a/2] \times [-3a/2 , 3a/2]$,
$[a/2, 3a/2] \times [-3a/2,3a/2]$, $[-3a/2,3a/2] \times [-3a/2,-a/2]$
and $[ -3a/2,3a/2] \times [a/2,3a/2]$ (whose union is the annulus
$[-3a/2,3a/2]^2 \setminus (-a/2,a/2)^2$).  By the preceding
discussion, $1- \BP[E_a]$ decays exponentially in $a$.

Suppose $E_a$ occurs for some $a$ with
$L \subset [-a/2,a/2]^2$. Let $h \geq 0$. If
$R'_{L,b}(\Phi_{(1-\eps)F},\Phi'_{\eps F+hG})  > 3a +b,$
then we must have $Z'_{L}(\Phi_{(1-\eps)F}+ \Phi'_{\eps F+hG}|_{B_{3a+b}})  
\cap Z_\infty(\Phi_{(1-\eps)F}) \neq \emptyset$, and
therefore
$R''_{L,b}(\Phi_{(1-\eps)F},\Phi'_{\eps F+hG})  \leq 3a + 2b$.
Hence 
$R_{L,b}(\Phi_{(1-\eps)F},\Phi'_{\eps F+hG})  \leq 3a +2 b$
whenever $E_a$ occurs, and the case $d=2$ of (\ref{MPL1})
 follows.


 Now consider $d \geq 3$.
	Suppose $0\leq h \leq \eps^2/2$. 
Choose $b \in (0,\infty)$ with $F \in \bM^b$ and
$G \in \bM_{\pm}^b$.
Set $\Phi''_{F + hG} := \Phi_{(1-\eps)F} + \Phi'_{\eps F + hG}$.
Define the event
	$A_{u} : = \{ R_{L,b} (\Phi_{(1-\eps)F},\Phi'_{\eps F + h G}) 
	> u \}$. 
It suffices to prove that $\BP( A_{u} )$
decays exponentially in $u$, uniformly over
$h \in [0, \eps^2/2]$.
We now fix such $h$.
Let $K$ be as in Lemma~\ref{condcross}, and choose 
$n_0 \in \N$ with  $L \ominus B_{2b}  \subset [-n_0K,n_0K]^d$.

For $n \in \Z$ let $S_{n}$ denote the slab
$((n-1)K,nK] \times \R^{d-1}$ and let $H_n$ denote
the half-space $\cup_{-\infty <m \leq n} S_m$. 
Given $u $ with $L \subset B_{u-b}$ and
 $n \geq  n_0$,
for $\varphi, \varphi' \in \bN^b$
set
\begin{displaymath}
W_{u,n} (\varphi, \varphi') = 
	Z_L
	((\varphi +\varphi')|_{
[H_n \cap B_u]}  ) 
\end{displaymath}
and define the indicator functions
\begin{align*}
  f_{u,n} (\varphi,\varphi') & := \I \{ 
			       Z((\varphi +\varphi')|_{[S_{n+1} ]})  
                               \cap 
                               W_{u,n} (\varphi,\varphi') 
                               \neq \emptyset \};   
  \\
  g_{u,n}(\varphi,\varphi') & := \I \{ 
                              Z_\infty ( \varphi|_{[S_{n+1} ]} ) 
                              \cap 
                              W_{u,n}(\varphi,\varphi') 
                              = \emptyset \};   
  \\
	h_{u,n}(\varphi,\varphi') & := \I \{ Z_{L} 
	((\varphi+ \varphi')|_{[B_u]} ) \setminus
			      H_n \neq \emptyset \} 
			      \I \{R''_{L,b}(\varphi,\varphi') > u\}.
\end{align*}
Then we claim that $f_{u,n+1} (\varphi,\varphi') \leq 
f_{u,n}(\varphi, \varphi')$.
Indeed,
\begin{align}
 &\mbox{if }  f_{u,n}(\varphi, \varphi') =0 \label{M82} \\ 
 &\mbox{ then } W_{u,n+1}(\varphi,\varphi') = W_{u,n}(\varphi,\varphi') \subset
(-\infty, nK +b] \times \R^{d-1}, \notag
\end{align}
so that
it is not possible for
$ Z((\varphi +\varphi')|_{[S_{n+2} ]}  )$
to intersect with $W_{u, n+1}(\varphi,\varphi') $.
 
Also for integers $m,n$ with $n_0 +1 \leq m < n < (u/K)-2$ we have
$h_{u,n+1}(\varphi, \varphi') \leq
f_{u,m}(\varphi,\varphi')
g_{u,m}(\varphi,\varphi')$
by (\ref{M82}).
Therefore 
 \begin{align}
   h_{u,n}(\varphi,\varphi') 
   \leq \prod_{m=n_0}^{n-2} f_{u,m}(\varphi,\varphi') g_{u,m}(\varphi,\varphi').
\label{M83}
 \end{align}

Denote by $\cF_n $ the $\sigma$-field generated by 
$(\Phi_{(1-\eps)F}|_{[H_n ]}, 
\Phi'_{\eps F + h G}|_{[H_n ]})$. If the conditional expectation of
 $f_{u,n}(\Phi_{(1-\eps)F},\Phi'_{\eps F + h G} ) $
 with respect to $\cF_n$ is at least 1/2, then by 
Lemma 
\ref{condcross}
with $A$ taken to be
$W_{u,n}(\Phi_{(1-\eps)F},\Phi'_{\eps F + h G})$
the conditional expectation of
 $1-g_{u,n} (\Phi_{(1-\eps)F}, 
\Phi'_{\eps F + h G})$ 
is at least $\gamma$.
(Lemma~\ref{condcross} is applicable when $A$ is a random closed set.)
Hence, setting
\begin{displaymath}
  V_{u,n} := 
	f_{u,n}(\Phi_{(1-\eps)F},\Phi'_{\eps F + h G})
	g_{u,n}(\Phi_{(1-\eps)F},\Phi'_{\eps F + h G}),
\end{displaymath}
we have
\begin{displaymath}
  \BE [ V_{u,n} \, |\,\cF_n] \leq 
  \max(1-\gamma,1/2) = 1- \gamma.
\end{displaymath}
Also, for each $n$, $V_{u,n+1}$ is $\cF_n$-measurable and
by (\ref{M83}) we have
 \begin{align}
	 \BE[h_{u,n+2}(\Phi_{(1-\eps)F},\Phi'_{\eps F + h G})]
\leq \BE \left[ \prod_{i=n_0}^{n} V_{u,i} \right] \nonumber \\
= \BE  \left[ \BE[ V_{u,n} | \cF_{n}] \times \prod_{i= n_0}^{n-1} V_{u,i} \right]
 \leq (1-\gamma) \BE  \prod_{i=n_0}^{n-1} V_{u,i} 
\nonumber \\
\leq \cdots \leq (1-\gamma)^{n-n_0}  \leq 
\exp(- \gamma(n-n_0)).
	 \label{0429a}
 \end{align}

 For each $i \in \{1,\ldots,d\}$ let $H_n^{i,+}:=\{x \in \R^d:\pi_i(x) 
 \leq Kn\}$
 and $H_n^{i,-}:=\{x \in \R^d:\pi_i(x) \geq -K n\}$, where
 $\pi_i$ denotes projection onto the $i$th coordinate, set
 $$
	h_{u,n}^{i,+}(\varphi,\varphi')  := \I \{ Z_{L} 
	((\varphi+ \varphi')|_{[B_u]} ) \setminus
			      H_n^{i,+} \neq \emptyset \}
			      \I \{R_{L,b}(\varphi,\varphi') > u\},
			      $$
	define $h_{u,n}^{i,-}(\varphi,\varphi') $ similarly.

	Now set $\varphi:= \Phi_{(1-\eps)F}$ and
	$\varphi':=\Phi'_{\eps F+ hG}$.
 Suppose $A_u$ occurs; then  
 $R'_{L,b}(\varphi, \varphi')
 > u$ and therefore 
 $Z_L(\varphi+ \varphi') \setminus
 B_{u-b} \neq \emptyset$. This implies that for
 some $i \leq d$ either
 $Z_L((\varphi + \varphi')|_{[B_u]} ) \setminus H_{\lfloor u/dK \rfloor}^{i,+} 
 \neq \emptyset$ or 
 $Z_L((\varphi+ \varphi')|_{[B_u]} ) \setminus
 H_{\lfloor u/dK \rfloor}^{i,-} \neq \emptyset$;
 also
 $R^{''}_{L,b}(\varphi, \varphi') >u$
 and therefore
  $h_{u,n}^{i,+} (\varphi,\varphi')=1$ or
  $h_{u,n}^{i,-} (\varphi,\varphi')=1$  for some $i \in \{1,\ldots,d\}$.
 Therefore by applying the argument leading to (\ref{0429a})
 in each of the $2d$ positive or negative coordinate directions,
 and then the union bound,
for $u$ a multiple of $b$ we have
\begin{displaymath}
  \BP ( A_{u} )
  \leq 2d \exp(- \gamma(\lfloor (u-b)/(dK) \rfloor -n_0 -2)),
\end{displaymath}
which gives us the desired result (\ref{MPL1}) for $d \geq 3$.

To deduce \eqref{MPL2}, assume without loss of generality that
$\Phi_F$ is given as
the superposition of $\Phi_{(1-\eps)F}$ and $\Phi'_{\eps F}$.
If
$ nb<  R_{L,b}(0,\Phi_{F} ) < \infty  $,
then $R_{L,b}(\Phi_{(1-\eps)F},\Phi'_{\eps F} ) > nb$,
so (\ref{MPL2}) will follow from taking $h=0$ in (\ref{MPL1}).
\end{proof}


\section{Proof of Theorem \ref{3diff}}
\label{secpfdiff}

Suppose that  $b \in \R_+$ and
$F \in \bM^b$ and $G \in \bM_\pm^b$,
with $t_c (F) <1$ and $F+ aG$ a measure for some $a >0$.

To ease notation, we shall assume 
additionally that $b=1$; 
the result for a general $b$ can be obtained
by using the scaling property of the Boolean model, see,
e.g.,~\cite{MeesterRoy96}.

Choose $\eps \in (0,1)$ with $1-\eps > t_c( F)$ and with $F +\eps G$ a
measure. Keep $F,G $ and $\eps$ fixed for the rest of this section.

Let $G_+ $ and $ G_-$ be the positive and negative parts in the
Hahn-Jordan decomposition of $G$ (so that $G_+$ and $G_-$ are mutually
singular measures and $G = G_+ - G_-$).  Let $h \in [0,\eps^2]$. Then
$\eps F - h G_-$ is a measure.
Recall from Section \ref{secprelim} that
$\Pi_F$ denotes the distribution of a Poisson
process on $\R^d \times \R_+$
with intensity measure $\mu(d(x,r)) = dx \,  F(dr)$.  
Let $ \Phi_{(1-\eps)F} $,
$\Psi_{\eps F - h G_-}$, $\Psi_{h G_-}$ and $\Psi_{h G_+}$ be
independent Poisson processes in $\R^{d} \times \R_+$ with respective
distributions
 $\Pi_{(1-\eps) F }$,
$\Pi_{\eps F - h G_-}$, 
$\Pi_{h G_-} $ and
$\Pi_{h G_+}$.
Set
\begin{align*}
\Phi'_{\eps F + h G} & 
:=  \Psi_{\eps F -h G_-} + \Psi_{h G_+};
\\
\Phi_{F} & :=  \Phi_{(1-\eps)F} + \Psi_{\eps F -h G_-} 
+ \Psi_{hG_-};
\\
\Phi_{F+ h G} & :=  \Phi_{(1-\eps)F} + \Phi'_{\eps F + h G}, \end{align*}
so that $ \Phi'_{\eps F + h G} $, $\Phi_{F}$ and
$\Phi_{F+ h G}$
   are  Poisson processes with
distribution
$\Pi_{\eps F + h G}$,
$\Pi_{F }$,
and
$\Pi_{F + h G}$,
respectively.
Also, for $n \in \N$ define
$$
\Phi'_{h,n} := \Psi_{\eps F - h G_-}  
+  \Psi_{hG_+}|_{[B_n ]}
+ \Psi_{hG_-}|_{[\R^d \setminus B_n]},
$$
which is a Poisson process
with intensity $dx  \times (\eps F + h G ) (dr)$
in $[B_n ]$, and 
with intensity $dx  \times \eps F (dr)$
in $[\R^d \setminus B_n]$.
Since $F$ and $G$ are supported by $[0,1]$,
for $\psi \in \bN^1$ we have
\begin{equation}
Z(\Phi_{(1-\eps)F} + \Phi'_{h,n} + \psi )  \cap B_{n-1} =
Z(\Phi_{F + hG} + \psi ) \cap B_{n-1} .
\label{M9}
\end{equation}

Our next lemma gives us the first part (\ref{M2conj0}) of
Theorem \ref{3diff}, among other things.
\begin{lemma}
\label{Mlem}
  Let $L\subset\R^d$ be compact, and let $\psi \in \bN^1$ with
$\psi(\BX) < \infty$.
For $\varphi \in \bN$,
set $\tilde{f}_{L,\psi}(\varphi) := \I \{ L \cap Z_\infty(\varphi + \psi) \neq \emptyset\}$.
  Then 
\begin{align}
\left. 
  \frac{d^+}{d h}
\BE_{  F + hG } \tilde{f}_{L,\psi}(\Phi )
\right|_{h=0}  
=
 \iint
\BE_{F} D_{(x,r)} \tilde{f}_{L,\psi}(\Phi) 
G(dr)dx,
\label{M11}
 \end{align}
and the right-hand side of (\ref{M11}) is finite. Also,
  given $h \in [0,\eps^2]$ we have almost surely
\begin{equation}
\tilde{f}_{L,\psi} (\Phi_{ F + h G } )
 = \lim_{n \to \infty}
 \tilde{f}_{L,\psi}(\Phi_{(1- \eps) F}  + \Phi'_{h,n}).
\label{M1}
\end{equation}
\end{lemma}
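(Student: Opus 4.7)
For the almost sure equality (\ref{M1}), I split the analysis by the value of $\tilde f_{L,\psi}(\Phi_{F+hG})$.

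If $\tilde f_{L,\psi}(\Phi_{F+hG})=1$: by uniqueness of the infinite cluster (equation (\ref{uic})) together with $Z_\infty(\Phi_{(1-\eps)F})\ne\emptyset$ almost surely, there exists a finite sequence of balls in $Z(\Phi_{F+hG}+\psi)$ connecting $L$ to $Z_\infty(\Phi_{(1-\eps)F})$. Let $D<\infty$ denote the largest center-distance from the origin of any ball of this path. Every ball of the path belongs to $\Phi_{(1-\eps)F}+\Psi_{\eps F-hG_-}+\Psi_{hG_+}$; the first two components are shared between the two processes, and the $\Psi_{hG_+}$-balls of the path lie in $B_n$ whenever $n>D$, hence also appear in $\Phi'_{h,n}$. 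The same path is therefore available in $Z(\Phi_{(1-\eps)F}+\Phi'_{h,n}+\psi)$, so $\tilde f_{L,\psi}(\Phi_{(1-\eps)F}+\Phi'_{h,n})=1$ for all $n>D$.

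If $\tilde f_{L,\psi}(\Phi_{F+hG})=0$: by Lemma~\ref{lint} (applied with a slightly smaller $\eps$ if necessary to cover the full range $h\in[0,\eps^2]$), the stabilization radius $R:=R_{L,1}(\Phi_{(1-\eps)F},\Phi'_{\eps F+hG}+\psi)$ is almost surely finite, and $Z_L(\Phi_{(1-\eps)F},\Phi'_{\eps F+hG}+\psi)\subset B_{R-1}$. Since $\tilde f_{L,\psi}=0$, \emph{every} cluster meeting $L$ is captured by this $Z_L$. For $n>R+2b$, the points of $\Psi_{hG_-}|_{\R^d\setminus B_n}$ (the balls present in $\Phi'_{h,n}$ but not in $\Phi'_{\eps F+hG}$) have centers at distance exceeding $n-(R-1)>2b+1$ from any ball contained in $B_{R-1}$, and so cannot attach to the cluster of $L$. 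Combined with the identity $Z_L(\Phi_{(1-\eps)F},\Phi'_{h,n}+\psi)=Z_L(\Phi_{(1-\eps)F},\Phi'_{\eps F+hG}+\psi)$ supplied by (\ref{eqstab}), this yields $\tilde f_{L,\psi}(\Phi_{(1-\eps)F}+\Phi'_{h,n})=0$ for all $n>R+2b$. Combining both cases proves (\ref{M1}).

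For (\ref{M11}), I truncate the perturbation so that Proposition~\ref{russo} is directly applicable, then pass to the limit. Set $\mu(d(x,r)):=dx\,F(dr)$ (locally finite) and $\nu_n(d(x,r)):=\I\{x\in B_n\}\,dx\,G(dr)$ (finite signed, since $G$ is finite and $B_n$ is bounded). Proposition~\ref{russo} yields
\begin{equation*}
\frac{d^+}{dh}\BE_{\mu+h\nu_n}\bigl[\tilde f_{L,\psi}(\Phi)\bigr]\Big|_{h=0}
=\int\BE_{\mu}\bigl[D_z\tilde f_{L,\psi}(\Phi)\bigr]\,\nu_n(dz),
\end{equation*}
and the Poisson process of intensity $\mu+h\nu_n$ has exactly the law of $\Phi_{(1-\eps)F}+\Phi'_{h,n}$ (intensities match: $F+hG$ inside $B_n$, $F$ outside). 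Integrating from $0$ to $h_0$ and letting $n\to\infty$: the left-hand side converges by (\ref{M1}) combined with bounded convergence, while the right-hand side converges by dominated convergence, the uniform bound coming from the exponential decay of $\BP\{D_{(x,r)}\tilde f_{L,\psi}(\Phi)\ne 0\}$ in $|x|$. This decay is again a consequence of Lemma~\ref{lint}: for the difference $D_{(x,r)}\tilde f_{L,\psi}(\Phi)$ to be non-zero, the added ball at $(x,r)$ must bridge $L$ to $Z_\infty$ via the $Z_L$-cluster structure, which forces $(x,r)$ into the stabilization neighborhood of $L$. Differentiating the resulting integrated identity in $h$ at $h=0$ produces (\ref{M11}) and simultaneously the finiteness of the right-hand integral.

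The main obstacle is handling the case $\tilde f_{L,\psi}(\Phi_{F+hG})=0$ in (\ref{M1}): the stabilization of Lemma~\ref{lint} bounds only those clusters of $\Phi^{\fin}+\Phi'$ meeting $L$ that do \emph{not} touch $Z_\infty(\Phi_{(1-\eps)F})$. One must verify that no \emph{new} bridge to $Z_\infty$ can arise in the second process through the differing $\Psi_{hG_-}$ balls outside $B_n$. The resolution combines the stabilization bound $Z_L\subset B_{R-1}$ with the elementary geometric constraint that balls of radius at most $b$ centered outside $B_n$ cannot touch any ball inside $B_{R-1}$ once $n>R+2b$, together with the stopping-radius identity (\ref{eqstab}).
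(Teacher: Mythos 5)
Your treatment of the almost sure convergence (\ref{M1}) is essentially right and close in spirit to the paper's: for the case $\tilde f_{L,\psi}(\Phi_{F+hG})=0$ the paper simply observes that $Z_L(\Phi_{F+hG}+\psi)$ is a.s.\ bounded (finitely many grains meet $L$, and each cluster meeting $L$ is bounded) and then uses the local agreement (\ref{M9}); your route through the stabilization radius works, but your appeal to Lemma \ref{lint} there is both unnecessary and not licensed as written: that lemma concerns $R_{L,b}(\Phi_{(1-\eps)F},\Phi'_{\eps F+hG})$ \emph{without} $\psi$ and only for $h\in[0,\eps^2/2]$, and your parenthetical fix goes the wrong way -- to cover $h$ up to $\eps^2$ you would need a \emph{larger} $\eps$ (which may violate $1-\eps>t_c(F)$ and $F+\eps G\ge 0$, and in any case changes the decomposition so the lemma no longer refers to the processes $\Phi_{(1-\eps)F},\Phi'_{\eps F+hG}$ actually at hand). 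Since a.s.\ finiteness of the radius on $\{\tilde f_{L,\psi}=0\}$ is elementary, this part is repairable.

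The genuine gap is in your derivation of (\ref{M11}). First, your uniform-in-$n$ domination of $\BE_{\mu+h\nu_n}D_{(x,r)}\tilde f_{L,\psi}(\Phi)$ again cites Lemma \ref{lint}, but under $\mu+h\nu_n$ the second component of the decomposition is $\Phi'_{h,n}$, whose intensity is $\eps F+hG$ inside $[B_n]$ and $\eps F$ outside; Lemma \ref{lint} treats only the spatially homogeneous $\Phi'_{\eps F+hG}$ (and without $\psi$ in the second argument), and since $G$ is signed there is no monotone coupling transferring the bound, so this domination needs a separate argument -- the paper's proof is organized precisely so that Lemma \ref{lint} is only ever applied to the untruncated process, via the inclusion (\ref{M10}) and the stopping-radius independence (\ref{eqstop}). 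Second, and more fundamentally, even granting the integrated identity $\BE\tilde f_{L,\psi}(\Phi_{F+h_0G})-\BE\tilde f_{L,\psi}(\Phi_F)=\int_0^{h_0}\Theta(h)\,dh$ with $\Theta(h):=\iint\BE_{F+hG}D_{(x,r)}\tilde f_{L,\psi}(\Phi)\,dx\,G(dr)$, your final step ``differentiating the resulting integrated identity in $h$ at $h=0$'' requires knowing that $h_0^{-1}\int_0^{h_0}\Theta(h)\,dh\to\Theta(0)$, i.e.\ (right-)continuity of $\Theta$ at $0$. Because $G$ is signed, no monotonicity or convexity is available, and continuity in $h$ of $\BE_{F+hG}D_{(x,r)}\tilde f_{L,\psi}(\Phi)$ -- a two-arm-type probability involving the infinite cluster -- is itself a nontrivial continuity-in-intensity statement of the same nature as the lemma being proved; you neither establish it nor reduce it to anything proved earlier. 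The paper avoids this obstacle by keeping the base measure fixed at $F$, telescoping over the annuli $B_n\setminus B_{n-1}$, computing $\lim_{h\to0+}\BE U_{h,n}$ for each $n$ directly from Proposition \ref{russo}, and then interchanging this limit with the sum over $n$ by means of a summable, $h$-uniform bound on $\BE|U_{h,n}|$; some such device is needed to make your scheme complete.
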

\begin{proof}
To see
 (\ref{M1}),
 first suppose
$\tilde{f}_{L,\psi} (\Phi_{ F + h G } ) =1$.
If also
 $\Phi_{F+ h G} \in \bU $
(which is the case almost surely by (\ref{uic})),
there must be a path
 from $L$ through
$ Z(\Phi'_{\eps F + h G } + \psi)
\cup (Z(\Phi_{(1-\eps)F}) \setminus
 Z_\infty(\Phi_{(1-\eps)F})) $
   to $Z_\infty(\Phi_{(1-\eps)F})$
(if $L \cap Z_\infty(\Phi_{(1-\eps)F}) \neq \emptyset$
we interpret this path as being empty).
Choose such a path, and choose
$m \in \N$ such that this path is contained in $B_{m-1}$. Then
for $n \geq m$, by (\ref{M9}) we have 
 $\tilde{f}_{L,\psi}(\Phi_{(1- \eps) F}  + \Phi'_{h,n}) =1$.

Conversely,  suppose
$\tilde{f}_{L,\psi} (\Phi_{ F + h G } ) =0$.
Then, 
 recalling the definition (\ref{ZAdef}) of $Z_L(\varphi)$,
we have that
 $Z_L(\Phi_{F + h G } + \psi ) $ 
 is bounded
 so we can
choose $m$ such that
 $Z_L(\Phi_{F + h G } + \psi )  \subset 
B_{m-1}$.
Then for  $n \geq m$, by (\ref{M9}) we have
 $\tilde{f}_{L,\psi}(\Phi_{(1- \eps) F}  + \Phi'_{h,n}) =0$. Thus we have
demonstrated (\ref{M1}).

For $h \in [0,\eps]^2$ and $n \in \N$ set 
\begin{displaymath}
U_{h,n} = h^{-1} (
 \tilde{f}_{L,\psi} (\Phi_{(1-\eps)F} + \Phi'_{h,n} ) -   
 \tilde{f}_{L,\psi} (\Phi_{(1-\eps)F} + \Phi'_{h,n-1} ) ).   
\end{displaymath}
By (\ref{M1}) and dominated convergence,
$$
\BE \tilde{f}_{L,\psi}(\Phi_{F +h G}) = \lim_{n \to \infty} \BE \tilde{f}_{L,\psi}(\Phi_{(1-\eps)F} +
\Phi'_{h,n}).
$$
Also $\Phi_{(1-\eps)F} + \Phi'_{h,0} =\Phi_{F}$ almost surely.  Thus
\begin{equation}
h^{-1} (\BE \tilde{f}_{L,\psi}(\Phi_{F + hG}) -\BE  \tilde{f}_{L,\psi}(\Phi_{F}) ) 
= \sum_{n=1}^\infty
\BE \, U_{h,n} .
\label{M2}
\end{equation}

By Proposition \ref{russo}, for each $n$ we have
\begin{align}
\lim_{h \to 0+} \BE U_{h,n} 
 = &
\lim_{h \to 0+} h^{-1} \BE [
 \tilde{f}_{L,\psi} (\Phi_{(1-\eps)F}  + \Phi'_{h,n}  ) -  
 \tilde{f}_{L,\psi} (\Phi_{F}) ]
\nonumber
\\
& -   
\lim_{h \to 0+} h^{-1}  \BE [ 
 \tilde{f}_{L,\psi} (\Phi_{(1-\eps)F} + \Phi'_{h,n-1} )
- \tilde{f}_{L,\psi}(\Phi_{F}) 
 ]
\nonumber
\\
 = & ~\BE \int_{\R_+}  
\int_{B_{n} \setminus B_{n-1}}   D_{(x,r)} \tilde{f}_{L,\psi} ( \Phi_{F} )\, dx\, G(dr).  
\label{M3}
\end{align}
If we can take this limit through the sum on the right-hand side of
(\ref{M2}), then we have the desired result (\ref{M11}).
To justify this interchange, we seek to
dominate the terms  of (\ref{M2}) by those of a summable
sequence, independently of $h$.

Note that $|U_{h,n} | \leq h^{-1}$. Also
if $\Phi'_{h,n}  =  \Phi'_{h,n-1} $ then clearly $ U_{h,n} = 0$,
so that 
\begin{equation}
\{ U_{h,n} \neq 0 \} \subset 
\{ 
\Phi'_{h,n}  \neq  \Phi'_{h,n-1} \}.   
\label{M19}
\end{equation} 
Recall that we write $R_{K}$ for the radius of stabilization
$R_{K,1}$ defined at (\ref{rstabdef}).
 We assert the further event inclusion
\begin{equation}
\{ U_{h,n} \neq 0 \} \subset 
\{ 
R_L(\Phi_{(1-\eps)F},\Phi'_{\eps F + hG} +\psi )  > n-1  \} \cup 
\{\Phi_{F} \notin \bU \}.
\label{M10}
\end{equation}
To see this, let $n \in \N$, and
suppose $R_L(\varphi,\varphi') \leq n-1 $, where we now set
$$
\varphi= \Phi_{(1-\eps)F}, ~~~ 
\varphi' = \Phi'_{\eps F + h G} + \psi.
$$
By the definition of $R_L(\varphi,\varphi')$, 
we have either $R''_L(\varphi,\varphi') \leq n-1$, or
$R'_L(\varphi,\varphi') \leq n-1 $.
We show that in both cases $U_{h,n}=0$.

First suppose $R''_L(\varphi,\varphi') \leq n-1$.
In this case 
$L \cap Z_\infty(\varphi + \varphi'|_{[B_{n-1}]} ) \neq \emptyset$.
From the definitions at the start of this section, we have that
\bea
\Phi'_{h,n}|_{[B_{n}]}= \Phi'_{\eps F+ hG} |_{[B_{n}]};
~~~~~
\Phi'_{h,n-1}|_{[B_{n-1}]}= \Phi'_{\eps F+ hG} |_{[B_{n-1}]}.
\label{eq:new1}
\eea
Thus
$ \phi'|_{[B_{n-1}]} \leq  \Phi'_{h,n-1} + \psi$ and
 $ \phi'|_{[B_{n-1}]} \leq
  \phi'|_{[B_{n}]} \leq
 \Phi'_{h,n} +\psi$, so
 we obtain from the definition that
 $U_{h,n} = h^{-1} (1-1) =0$
 in this case.

Consider
the other case, with 
$R'_L(\varphi,\varphi') \leq n-1 $.
In this case
we have $Z_L(\varphi + \varphi') \subset B_{n-2}$.
Then any changes to $\varphi' $ outside
$[B_{n-1}]$ do not cause any change to
 $Z_L(\varphi + \varphi') $, so for all locally finite
 point measures $\chi$ we have
$$
Z_L(\varphi + \varphi'|_{[B_{n-1}]} + \chi|_{[\R^d 
\setminus B_{n-1} ]} )   \cap Z_\infty (\varphi) = \emptyset,
$$
and from this, using (\ref{eq:new1}) we can deduce that 
\begin{displaymath}
\tilde{f}_{L,\psi}(\phi + \Phi'_{h,n-1})
	 =  {\bf 1}\{ L \cap Z_\infty(\phi  + \phi'|_{[B_{n-1}]} 
	 + (\Phi'_{h,n-1}+\psi)|_{[\R^d \setminus B_{n-1}]}) 
	 \neq \emptyset \} =  0,
\end{displaymath}
and likewise $\tilde{f}_{L,\psi}(\phi + \Phi'_{h,n})=0$, so that
 $U_{h,n} = h^{-1} (0-0) =0$. This justifies (\ref{M10}).


 The event
 $\{\Phi'_{h,n} \neq\Phi'_{h,n-1}\} = \{(\Psi_{hG_+} + \Psi_{h G_-})
 ([B_n\setminus B_{n-1}])\neq 0\}$ (up to an event of probability
 zero) is independent of the event
 $\{R_L(\Phi_{(1-\eps)F},\Phi'_{\eps F + hG} + \psi) \leq n-1 \}$,
 by the stopping radius property (\ref{eqstop}),
and therefore also independent of the complementary event
 $\{R_L(\Phi_{(1-\eps)F},\Phi'_{\eps F + hG} + \psi) > n-1 \}$.
  Also
we have
the event inclusion:
\begin{align*}
\{ R_L(\Phi_{(1-\eps)F},\Phi'_{\eps F +h G} +\psi) > n-1
 \}
\subset \{ R_{L \cup Z(\psi)}
(\Phi_{(1-\eps)F},\Phi'_{\eps F +h G}) > 
n-1  \}.
\end{align*}
Therefore by (\ref{M19}),  (\ref{M10}) and  (\ref{uic}) we have
\begin{align*}
\BE | U_{ h,n} | 
 \leq 
 ~ 
  h^{-1} \BP \{ 
 \Phi'_{h,n}  \neq \Phi'_{h,n-1} \}\,
 \BP \{ R_{L \cup Z(\psi)}
(\Phi_{(1-\eps)F},\Phi'_{\eps F +h G}) > 
n-1  \}.
\end{align*}
Also there is a constant $c' \in (0,\infty)$ such that
 $\BP \{ 
 \Phi'_{h,n}  \neq \Phi'_{h,n-1} \} \leq n^{d-1} h c'$. 
Hence by Lemma \ref{lint}, there
is a  constant  $c \in (0,\infty)$ (independent of $n$ and $h$,
provided $0 \leq h \leq \eps^2/2$)
such that
\begin{align*}
\BE | U_{ h,n} | 
\leq  
 ~  c n^{d-1} \times \exp( - c^{-1} n )
\end{align*}
which is summable in $n$.  Hence by (\ref{M2}), (\ref{M3})
and dominated convergence we have (\ref{M11}).
This also shows that the right side of (\ref{M11}) 
is finite.
\end{proof}

\begin{proof}[Proof of Theorem~\ref{3diff}] 
We prove the result just for $b=1$.
The first part (\ref{M2conj0}) holds by
Lemma \ref{Mlem}. To prove the second part
 we take $F, G$ and $\eps$ as before but
now assume additionally that $F - \eps G$ is a measure.
Let $L \subset \R^d$ be compact.
As above,  
for each $\phi \in \bN$ and each  
   $\psi \in \bN^1$ with
$\psi(\BX) < \infty$,
set $\tilde{f}_{L,\psi}(\varphi) := 
\I \{ L \cap Z_\infty(\varphi + \psi) \neq \emptyset\}$, and set
 $\tilde{f}_L(\phi) := \tilde{f}_{L,0} (\varphi) =
 \I \{L \cap Z_\infty(\phi) \neq \emptyset\}$.
We shall prove by induction that for
 $n \in \N$ and 
$h \in (-\eps^2,\eps^2) $, we have
\begin{multline}
  \frac{d^{n}}{dh^n}\theta_L(  F + hG )
\\ =
\int \cdots \int
\BE_{F + hG } 
D^n_{(x_1,r_1),\ldots,(x_n,r_n)} \tilde{f}_L (\Phi) 
dx_1 G (dr_1) \cdots dx_n G(dr_n),
\label{M17}
\end{multline}
which implies (\ref{M2conj3}).

First consider $n=1$. Then (\ref{M17}) holds
for the right derivative 
at $h=0$ by Lemma \ref{Mlem}. Also, by applying
this fact to
$-G$ instead of $G$ we have that (\ref{M17}) holds
for the left derivative at $h=0$ too, so (\ref{M17}) holds
at $h = 0$. Therefore  (\ref{M17}) also holds
at other $h \in (-\eps^2,\eps^2)$ because we can
apply  the case $h=0$ of (\ref{M17}) to $F^*$ 
and $G^*$, given by  $F^*= F+h G$, and $G^* = G$.
Note that $F^*$
 is strictly supercritical because $F^* = (1-\eps) F
+ \eps(F+ (h/\eps) G)$ and
$ \eps(F+ (h/\eps) G)$ is a measure since $|h| < \eps^2$.

Now we perform the inductive step.
Let
 $n \in \N$, and
 suppose (\ref{M17}) holds for 
 all $h \in (-\eps^2,\eps^2)$.
Then for $0 < h < \eps^2 $,
\begin{align}
h^{-1}
   \Bigl(\left. \frac{d^{n}}{ds^n}\theta_L(  F + sG )\right|_{s=h}
-
  \left. \frac{d^{n}}{ds^n}\theta_L(  F + sG )\right|_{s=0}
\Bigr)
\nonumber \\
= \int \cdots \int u_{x_1,r_1,\ldots,x_n,r_n} (h)\,  dx_1 G(dr_1) \cdots dx_n G(dr_n),
\label{M14}
\end{align}
where we set
\begin{eqnarray*}
u_{x_1,r_1,\ldots,x_n,r_n} (h) := h^{-1} \left( 
\BE D^n_{(x_1,r_1),\ldots,(x_n,r_n)}\tilde{f}_L(\Phi_{F + hG}) 
\right.
\\
-
\left. \BE D^n_{(x_1,r_1),\ldots,(x_n,r_n)}\tilde{f}_L(\Phi_{F }) 
\right).
\end{eqnarray*}
Applying Lemma \ref{Mlem} to the function
$D^n_{(x_1,r_1),\ldots,(x_n,r_n)}\tilde{f}_L$ 
(expressed as a sum as in (\ref{differn2})) gives us
as $h \to 0$ that
\begin{align}
u_{x_1,r_1,\ldots,x_n,r_n} (h) \to \int \int 
\BE D^{n+1}_{(x,r),(x_1,r_1),\ldots,(x_n,r_n)} \tilde{f}_L(\Phi_{F}) 
\,dx\, G(dr). 
\label{M16}
\end{align}
For $1 \leq i \leq n$,
write $z_i$ for $(x_i,r_i)$.
By (\ref{M1}) applied to $\tilde{f}_{L,\psi}$ for each $\psi \in \bN^1$
with $\psi \leq \sum_{i=1}^n \delta_{z_i}$, and dominated convergence,
we have for $(z_1,\ldots,z_n) \in \BX^n$
 and $|h| < \eps^2$ that
\begin{align}
u_{z_1,\ldots,z_n}(h)
 & = h^{-1} \left(  \lim_{m \to \infty}
\BE D^n_{z_1,\ldots,z_n} \tilde{f}_{L}(\Phi_{(1-\eps)F} + \Phi'_{h,m}) 
- \BE D^n_{z_1,\ldots,z_n} \tilde{f}_{L}(\Phi_{F} ) \right) 
\nonumber \\
& = \sum_{m=1}^\infty  \BE V(h, m,z_1,\ldots,z_n),
\label{M15}
\end{align}
where we set
\begin{multline}
  V(h, m,z_1,\ldots,z_n)
  := h^{-1} \Bigl(
    D^n_{z_1,\ldots,z_n}\tilde{f}_{L}(\Phi_{(1-\eps) F} + \Phi'_{h,m} ) \\-
    D^n_{z_1,\ldots,z_n}\tilde{f}_{L}(\Phi_{(1-\eps) F} + \Phi'_{h,m-1} )\Bigr). \label{eq:2}
\end{multline}
Now, $|V(h,m,z_1,\ldots,z_n)| \leq 2^{n+1} h^{-1}$
and clearly we have
\begin{align}
\{ V(h,m,z_1,\ldots,z_n) \neq 0 \} \subset \{ \Phi'_{h,m} \neq 
\Phi'_{h,m-1} \}.  
\label{M19a}
\end{align}


Set  $M = \max (m, |x_1|,\ldots,|x_n| )$. 
Suppose $M \geq (2n+4)  (\diam (L \cup \{0\}) +4 )$.
Choose $I \in \{1,\ldots, 2n+3\}$ such that the annulus
$$
\Gamma_{M,n,i} := B_{(I+1)M/(2n+4)} \setminus B_{IM/(2n+4)} 
$$
intersects none of the balls  $B_{r_1}(x_1),\ldots,B_{r_n}(x_n)$
and also
does not intersect the annulus $B_{m+1} \setminus  B_{m-2}$;
to be definite, choose the smallest such $I$. 

Cover the boundary $\partial B_{(I+0.5)M/(2n+4)} $ of the ball
$B_{(I+0.5)M/(2n+4)} $ with a deterministic collection of unit balls
$C_1, \ldots C_{k(M)}$, each $C_j$ with center $w_j$ in
$\partial B_{(I+0.5)M/(2n+4)} $, with $k(M) = O(M^{d-1})$. Let
$\tau_j(\varphi)$ denote the shifted point measure 
$\varphi (\cdot + w_j)$. Define the event
\begin{align}
A'_{h,m,z_1,\ldots,z_n} :=
\cup_{j=1}^{k(M)}
\{ R_{B_1}
(\tau_j(\Phi_{(1-\eps)F}), \tau_j(\Phi'_{h,m}))
\geq (M/(4n+8)-2) \}.
	\label{e:A'}
\end{align}
We remark that our definition  of
$A'_{h,m,z_1,\ldots,z_n} $ is different from the one suggested in
\cite{LPZ24}; this is because we subsequently found some further 
issues with verifying  \eqref{M12} below using that definition.

Write just $\Phi$ for $\Phi_{(1-\eps)F}$.
Note that the  events $ \{ \Phi'_{h,m} \neq \Phi'_{h,m-1} \} $ and  
$A'_{h,m,z_1,\ldots,z_n}$ are independent. 
Indeed, the first event is determined by $\Psi_{hG_+}|_{[B_m \setminus 
B_{m-1}]}$ and
$\Psi_{hG_-}|_{[B_m \setminus B_{m-1}]}$, while the second 
event is determined by $\Phi$,
$\Psi_{\eps F- hG}$,
and the restrictions of 
$\Psi_{hG_-}$ and $\Psi_{hG_+}$ to $[\Gamma_{M,n,I}]$,
and the annuli $B_{m} \setminus B_{m-1}$
and $\Gamma_{M,n,I}$
are disjoint.
We  will show next  that
\begin{align}
\{ V(h,m,z_1,\ldots,z_n) \neq 0 \} 
\subset 
A'_{h,m,z_1,\ldots,z_n}\,.
\label{M12}
\end{align}
To justify this, observe first that
by the definition of $M$, at least one of the sets
$B_{r_1}(x_1),\ldots,$ $ B_{r_n}(x_n),$ $ B_{m+2}\setminus B_{m-1}$ is
 {\em exterior} to $B_{(I+1)M/(2n+4)}$, i.e.\ has empty intersection
 with it.

Suppose that $A'_{h,m,z_1,\ldots,z_n}$ does not occur.
Suppose also that one of the balls $B_{r_i}(x_i)$
(say the ball $B_{r_1}(x_1)$) is exterior to
 $B_{(I+1)M/(2n+4)}$; then for any 
$\psi \in \bN$ with $\psi \leq \sum_{i=2}^n \delta_{z_i}$
we claim that
\begin{align}
\tilde{f}_{L, \psi}(\Phi + \Phi'_{h,m})
= \tilde{f}_{L, \psi}(\Phi + \Phi'_{h,m} + \delta_{z_1}).
\label{e:Dx1}
\end{align}
Indeed, suppose otherwise;
then the left-hand side
of \eqref{e:Dx1} equals zero, while the right-hand side equals 1.
Then there must exist a bounded component
of $Z(\Phi + \Phi'_{h,m}+ \psi)$ that connects $L$ to
$B_{r_1}(x_1)$. This component must cross the annulus
$\Gamma_{M,n,I}$ and therefore passes through 
$C_j$ for some $j \leq k(M)$; hence for this choice of $j$ we have
$R'_{B_1}(\tau_j(\Phi),\tau_j(\Phi'_{h,m})) \geq (M/(4n+8)) -2$.
Therefore since we assume 
 $A'_{h,m,z_1,\ldots,z_n}$ does not occur, we must have
$R''_{B_1}(\tau_j(\Phi),\tau_j(\Phi'_{h,m})) < (M/(4n+8)) -2$.
Since $B(w_j,(M/(4n+8))-2) \subset \Gamma_{M,n,i}$,
this implies
\begin{align}
C_j \cap Z_\infty(\Phi + \Phi_{h,m}|_{
	[\Gamma_{M,n,I}]}) 
\neq \emptyset,
	\label{e:shiftR''}
\end{align}
and therefore $L$ is connected (via $C_j$) to 
$Z_\infty(\Phi + \Phi_{h,m} + \psi)$
so the left side of \eqref{e:Dx1} equals 1, which is a contradiction.

By \eqref{e:Dx1} we have
 $D^n_{z_1,\ldots,z_n}\tilde{f_L}
(\Phi + \Phi'_{h,m})  =0$, and we can show similarly that
\linebreak
 $D^n_{z_1,\ldots,z_n}\tilde{f_L}
(\Phi + \Phi'_{h,m-1})  =0$, so that
$V(h,m,z_1,\ldots,z_n)=0$ in this case.

Now suppose instead that the annulus $B_{m+1} \setminus B_{m-2}$
is exterior to  $B_{(I+1)M/(2n+4)}$, and as before that
$A'_{h,m,z_1,\ldots,z_n}$ does not occur. 
Let
$\psi \in \bN$ with $\psi \leq \sum_{i=1}^n \delta_{z_i}$.
Then we claim that
\begin{align}
\tilde{f}_{L, \psi}(\Phi + \Phi'_{h,m})
= \tilde{f}_{L, \psi}(\Phi + \Phi'_{h,m-1} ) .
\label{e:DM}
\end{align}
Indeed, suppose 
$\tilde{f}_{L, \psi}(\Phi + \Phi'_{h,m}) =1$ and
$ \tilde{f}_{L, \psi}(\Phi + \Phi'_{h,m-1} )  =0$. 
Then there must be a bounded component of $Z(\Phi + 
(\Phi'_{h,m} + \psi)|_{[B_{m-1}]})$ connecting $L$
to $\R^{d} \setminus B_{m-2}$. This component
must pass through $C_j$ for some $j \leq k(M)$, so that
for this $j$
(as in the previous case)
$R'_{B_1}(\tau_j(\Phi),\tau_j(\Phi'_{h,m})) \geq (M/(4n+8)) -2$
and hence 
$R''_{B_1}(\tau_j(\Phi),\tau_j(\Phi'_{h,m})) < (M/(4n+8)) -2$,
so that \eqref{e:shiftR''} holds.
But this implies
$L$ is connected (via $C_j$) to 
$Z_\infty(\Phi + (\Phi_{h,m} + \psi)|_{[B_{m-1}]})
= Z_\infty(\Phi + (\Phi_{h,m-1} + \psi)|_{[B_{m-1}]})$,
so both sides of  \eqref{e:DM} are equal to 1, which is a contradiction.
We can obtain a contradiction similarly if
$\tilde{f}_{L, \psi}(\Phi + \Phi'_{h,m}) =0$ and
$ \tilde{f}_{L, \psi}(\Phi + \Phi'_{h,m-1} )  =1$,
so we have justified the claim \eqref{e:DM}.
By \eqref{e:DM} we have 
$V(h,m,z_1,\ldots,z_n)=0$ in this case too.
Together with the previous two paragraphs, this implies 
 the assertion (\ref{M12}).

We show next that  for $n$ and $L$ fixed 
there is a constant $c$ such that for all $m$,
all  $z_1,\ldots,z_n
\in \R^d \times [0,1]$ and all $h \in (-\eps^2/2,\eps^2/2)$  we have
\begin{align}
\label{M13}
 \BP[ A'_{h,m,z_1, \ldots, z_n} 
 ] \leq 
c \exp ( -c^{-1} M).
\end{align}
Assume again that $M \geq (2n +4 ) (\diam (L \cup \{0\}) + 4 )$.
For each $j$ let $C'_j: = B(w_j, M/(4n+8)) $.
Since 
   the restriction of Poisson process  
$\Phi'_{h,m}$
or $\Phi'_{h,m-1}$
to $[C'_j]$ has intensity
of product form
(either $dx  \times (\eps F + h G ) (dr)$
or $dx  \times \eps F (dr)$, depending
on whether or not the annulus $B_{m}  
\setminus B_{m-1} $ is exterior to $B_{(I+1)M/(2n+4)}$), 
 we can use the union bound and Lemma~\ref{lint} to obtain~(\ref{M13}). 

Using (\ref{M19a}),  (\ref{M12})
  and (\ref{M13}), we obtain 
that there is a finite constant (again denoted $c$, and  depending on $n$) such that
\begin{align*}
	\BE | V(h,m,z_1,\ldots,z_n) |
	&\leq \BP \{ \Phi'_{h,m} \neq \Phi'_{h,m-1} \}
	\BP \{ A'_{h,m,z_1,\ldots,z_n} \}
	\\
	& \leq c m^{d-1} \exp( -( m+
\max (|x_1|,\ldots,|x_n|) ) /c ),
\end{align*}
which is summable in $m$ with the sum being integrable in
$(z_1,\ldots,z_n)$.  Then using (\ref{M15}) and dominated convergence
we can take the limit (\ref{M16}) inside the integral (\ref{M14}), so
that
\begin{align*}
\left.
 \frac{d^+}{dh}
 \frac{d^{n}}{dh^n}\theta_L(  F + hG )
\right|_{h=0} 
=
\BE_{F  } 
\int \cdots \int
D^{n+1}_{(x,r),(x_1,r_1),\ldots,(x_n,r_n)} \tilde{f}_{L} (\Phi) 
\\
\times
dx G(dr)\,
dx_1 G (dr_1) \cdots dx_n G(dr_n).
\end{align*}
Also we can repeat this argument using $-G$ 
instead of $G$ to get the same value  for the left derivative at $h=0$
leading to~(\ref{M17}) for $n+1$ with  $h=0$.
Then for $n+1$ and for a general $h \in (-\eps^2,\eps^2)$
we have (\ref{M17}) by applying the $h=0$ result and using
the  measure $F+h G$ instead of $F$. This completes the induction.
\end{proof}

\section{Proof of Theorem \ref{texponent}}
\label{texproof}

Given a graph $\bG =(V,E)$, and given $v \in V$, let us denote by
$\bG \setminus v$ the graph $\bG$ with $v$ and all edges incident to
$v$ removed.  If $u,v,w$ are distinct vertices of $\bG$, let us say
vertex $w $ is $(u,v)$-pivotal if $u$ and $v$ lie in the same
component of $\bG$ but different components of $\bG \setminus w$.
\begin{lemma}
  \label{pivlemma}
  Suppose $\bG = (V,E)$ is a finite connected graph, and $u,v \in E$ 
with $u \neq v$.
  Then either $\bG$ has at least one $(u,v)$-pivotal vertex, or there
  exist at least two vertex-disjoint paths in $\bG$ from $u$ to $v$.
  Also, in the first case, every path from $u$ to $v$ in $\bG$ passes
  through the $(u,v)$-pivotal vertices in the same order.
\end{lemma}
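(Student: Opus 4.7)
The plan is to handle the dichotomy and the ordering as two separate tasks. For the dichotomy, note that a vertex $w \in V \setminus \{u,v\}$ is $(u,v)$-pivotal precisely when $\{w\}$ forms a $u$-$v$ vertex cut in $\bG$; hence the absence of any $(u,v)$-pivotal vertex means that no single internal vertex separates $u$ from $v$. By the vertex form of Menger's theorem, the maximum number of internally vertex-disjoint $u$-$v$ paths equals the minimum size of a $u$-$v$ vertex separator, so at least two internally vertex-disjoint $u$-$v$ paths must exist.

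For the ordering, let $W$ denote the nonempty set of $(u,v)$-pivotal vertices. For each $w \in W$, write $U_w$ and $V_w$ for the connected components of $\bG \setminus w$ containing $u$ and $v$ respectively. The key preliminary observation is that every $w \in W$ lies on every $u$-$v$ path, since otherwise removing $w$ would not disconnect $u$ from $v$. Using this, for distinct $w, w' \in W$ pick any $u$-$v$ path $P$: it visits both $w$ and $w'$ exactly once, so one of them precedes the other. If $w'$ appears before $w$ on $P$, the sub-path $P[u,w']$ avoids $w$, forcing $w' \in U_w$; symmetrically $w \in V_{w'}$. In the opposite case one gets $w' \in V_w$ and $w \in U_{w'}$. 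In particular $w'$ cannot lie in a third component of $\bG \setminus w$, since otherwise any $u$-$v$ path through $w'$ would be forced to traverse $w$ twice.

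Now define a relation $\prec$ on $W$ by $w' \prec w$ iff $w' \in U_w$. The analysis above shows $\prec$ is well-defined, antisymmetric, and that for any $u$-$v$ path the relative order of $w,w'$ on the path agrees with $\prec$. Transitivity comes by applying the same path argument to a triple $\{w_1,w_2,w_3\}$: a single $u$-$v$ path determines a linear order on the three that matches $\prec$. Since $\prec$ is defined intrinsically from $\bG$, every $u$-$v$ path traverses the elements of $W$ in the same ($\prec$-increasing) order, which is the second conclusion of the lemma.

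The only real subtlety is the well-definedness of $\prec$, i.e.\ excluding the possibility that $w'$ sits in a component of $\bG \setminus w$ other than $U_w$ or $V_w$. This is handled cleanly by combining two facts: every pivotal vertex lies on every $u$-$v$ path, and a path visits each vertex at most once. Everything else reduces to routine casework on the position of $w'$ relative to $w$ along a chosen $u$-$v$ path, and the use of Menger's theorem in the first part is standard.
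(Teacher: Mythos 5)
Your proof is correct, and while the first half coincides with the paper's, the second half takes a genuinely different route. For the dichotomy you invoke the vertex form of Menger's theorem exactly as the paper does. For the ordering claim, the paper argues by a two-line splicing contradiction: if one $u$-$v$ path visits $w$ before $w'$ and another visits $w'$ before $w$, then following the first path up to $w$ and the second from $w$ to $v$ produces a $u$-$v$ connection avoiding $w'$, so $w'$ would not be pivotal. You instead observe that every pivotal vertex lies on every $u$-$v$ path, and then define an intrinsic relation $w'\prec w$ iff $w'$ lies in the component $U_w$ of $\bG\setminus w$ containing $u$; your case analysis correctly shows that the relative position of $w,w'$ on any given path is equivalent to this path-independent condition, so all paths order the pivotal vertices identically (transitivity of $\prec$ is then automatic and not really needed for the conclusion). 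The paper's argument is shorter; yours is longer but produces a canonical total order on the pivotal set and, since you only ever pass to subpaths of a single path, it avoids the small implicit step in the paper's splice that a $u$-$v$ walk avoiding $w'$ must be reduced to a path avoiding $w'$. Both arguments ultimately rest on the same observation that a pivotal vertex is met by every $u$-$v$ path, and both are complete.
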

\begin{proof}
  The first assertion is an immediate consequence of Menger's theorem
  (see e.g. \cite{Boll79}).

  To see the second assertion, suppose $w,w'$ are distinct
  $(u,v)$-pivotal vertices, and there is a path from $u$ to $v$
  passing through $w$ before $w'$, and another such path passing
  through $w'$ before $w$. Then following the first path from $u$ as
  far as $w$, and then the second path from $w$ to $v$, we obtain a
  path from $u$ to $v$ avoiding $w'$; hence $w'$ is not
  $(u,v)$-pivotal, which is a contradiction.
\end{proof}

\begin{proof}[Proof of Theorem~\ref{texponent}] 
  Our proof uses ideas from \cite{ChayesCh86}.  We start by introducing
  some notation.
Fix $b >0$, $F \in \bM_1^b$ and compact $L \subset \R^d$.
  Since $F$ is fixed, we write $\theta_L(t)$ for
  $\theta_L(tF)$ in this proof. By a {\em path} in a configuration
  $\varphi\in \bN$ we mean a finite or infinite sequence
  $K_1,K_2,\dots$ of distinct grains such that $K_i=B_{r_i}(x_i)$ for
  some $(x_i,r_i)\in\varphi$ and $K_i\cap K_{i+1}\ne\emptyset$ for all
  $i \ge 1$ with $K_{i+1}$ part of the sequence.  A path {\em
    intersects} a subset of $\R^d$, if one of its constituent grains
  intersects this set.  If $A,A'$ are disjoint subsets of $\R^d$ and a
  path intersects both $A$ and $A'$, then we say the path {\em joins}
  $A$ to $A'$.  We shall say that two paths $(K_1,K_2,\ldots)$ and
  $(K'_1,K'_2,\ldots)$ in $\varphi$ are {\em disjoint} if
  $K_i \neq K'_j$ for all $i,j$.

  For
 $n \in \N$ introduce events that
  there is a path joining $L$ to the complement of $B_n$ or to
  infinity,
  \begin{displaymath}
    J_L^n :=\{\phi\in\bN:\   Z_L(\phi) \setminus B_n \neq\emptyset\};
    \quad
    J_L:=\{\phi\in\bN:\ L\cap  Z_\infty(\phi)\neq\emptyset\}. 
  \end{displaymath}
where the notation $Z_L$ is as defined in (\ref{ZAdef}).
Assume from now on that $n$ is so large that
 $L \subset B_{n-2b}$.  If $\phi\in J_L^n$, but
  $(x,r)\in\phi$ is such that $\phi-\delta_{(x,r)}\not\in J_L^n$, we
  say that the grain $B(x,r)$ is {\em pivotal} for $J_L^n$ in the
  configuration $\phi$.

  Let $\theta_L^n(t):=\BP_{tF}\{\Phi\in J_L^n\}$.  We claim that when
  $\varphi \in J_L^n$, either there are at least two disjoint paths
  from $L$ to $\R^d \setminus B_n$ in $\varphi$ or there is at least
  one pivotal grain for $J_L^n$ and there is a unique {\em last
    pivotal grain} for $J_L^n$ when counting from $L$.  To see this
  claim, apply Lemma~\ref{pivlemma} to the intersection graph of the
  set
  \begin{equation}\label{eq:defbn}
    \mathbf{B}_n=\{ B_{r_i} (x_i): (x_i ,r_i) \in \varphi\ \text{such
      that}\ B_{r_i}(x_i)\cap B_n\ne\emptyset\}  \cup
    \{L, \R^d \setminus B_n\}.
  \end{equation}

Let $\varphi \in  \bN$ and  $n  \in \N$. Suppose there is
a unique last pivotal grain for 
   $J_L^n$  and denote this last pivotal grain by
$K := B_r(x)$. If
  $K \subset B_n$, then there exist three disjoint paths in
  $\varphi - \delta_{(x,r)}$: one which joins $L$ to $K$ and two which
  join $K$ to $\R^d \setminus B_n$. If not (i.e.\ if $K \setminus B_n 
\neq \emptyset$), there is still a path
  joining $K$ to $L$.  Even in this case we say
  that there are two disjoint paths joining $K$ to
  $\R^d \setminus \partial B_n$ which are just both empty; see
  Figure~\ref{fig:pivotals}.
    \begin{figure}[ht]
      \centering \input{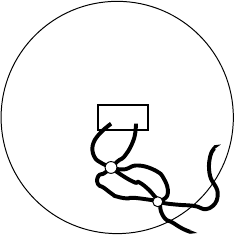_t}
    \caption{Geometry of the paths (depicted as black curves) connecting
     $L$ to $\R^d\setminus B_n$. Pivotal grains for $J_L^n$ are coloured white.
     The last pivotal grain starting from $L$ is denoted $K$.
     \label{fig:pivotals}}
 \end{figure}
  We then have
  \begin{multline*}
    \theta_L^n(t)=\BP_{tF}\{\text{there are two disjoint paths in 
$\Phi$  joining $L$ to $\R^d \setminus B_n$}\}\\
    + \BE_{tF}\int\I\{\text{$B_r(x)$ is the last pivotal grain for
      $J_L^n$ in $\Phi$}\}\,\Phi(d(x,r))\\
    \leq (\theta_L^n(t))^2 + t
\iint 
\BP_{tF}\{\text{$B_r(x)$ is the
      last pivotal grain for $J_L^n$ in $\Phi+\delta_{(x,r)}$}\}
\\
\times
\, dx\,F(dr),
  \end{multline*}
  where we have used the B-K inequality \cite[Th.~2.3]{MeesterRoy96}
  to bound the first term from above and the Mecke identity
  (\ref{eq:mecke}) for the second term. Now $B_r(x)$ is the last
  pivotal grain for $J_L^n$ in $\Phi+\delta_{(x,r)}$ if and only if
  there are two disjoint paths in the
 configuration $\Phi+\delta_{(x,r)}$,
  one of them joining $B_r(x)$ and $\R^d \setminus B_n$ (possibly
  empty, if $B_r(x) \setminus B_n \neq \emptyset$), and the other one
  joining $L$ and $\R^d \setminus B_n$ and using $B_r(x)$, and all
  paths joining $L$ to $\R^d \setminus B_n$ use $B_r(x)$.  We claim
  that this is the same as saying the events
  \begin{displaymath}
    E_{n,x,r} := \{\phi:\ \varphi + \delta_{(x,r)} \in J_L^n, \phi \notin J_L^n \}
  \end{displaymath}
  and $J_{B_r(x)}^n$ occur disjointly
 in the sense of \cite{GuRao99},
 so that by the continuum Reimer
  inequality in that paper, we get
  \begin{equation}\label{eq:33}
    \theta_L^n (t)\leq (\theta_L^n (t))^2
    + t \iint \BP_{tF} \{ \Phi \in E_{n,x,r}\}\,
    \BP_{tF} \{\Phi\in J_{B_r(x)}^n\}\, dx\,F(dr).
  \end{equation}

  Let us justify our claim.  With probability 1, there exists a finite
  (random) $\varepsilon$ such that displacement of the locations $x_i$
  with $(x_i,r_i) \in \Phi|_{[B_{n+2b} ]} $ by at most $\varepsilon$,
  and modification of the corresponding $r_i$ by at most
  $\varepsilon$, would not affect the intersection graph on
 $\mathbf{B}_n \cup B_r(x)$.
  Suppose $\Phi$ is such that there are disjoint paths $P,P'$ in
  configuration $\Phi+\delta_{(x,r)}$, with $P$ joining $B_r(x)$ and
  $\R^d \setminus B_n$ and $P'$ joining $L$ and $\R^d \setminus B_n$
  and using $B_r(x)$, and suppose also all paths joining $L$ to
  $\R^d \setminus B_n$ use $B_r(x)$. Let $\eps$ be as defined
  above. Let ${\cal K}$ be a union of rational $(d+1)$-cubes of side
  less than $\varepsilon /(d+1)$ centered on the points $(x_i,r_i)$ such
  that $B_{r_i}(x_i) \in P$.  Let ${\cal L}$ be the complement of
  ${\cal K}$ in $\R_d \times \R_+$.

  If we modify our configuration $\Phi$ arbitrarily in ${\cal L}$ then
  we are still in $J^n_{B_r(x)}$, since the points of $\Phi$ inside
  ${\cal K}$ guarantee occurrence of $J^n_{B_r(x)}$.

  On the other hand, if we modify $\Phi$ arbitrarily in ${\cal K}$
  then we still have a path joining $L$ to $\R^d \setminus B_n$ using
  $B_r(x)$ (because our configuration in ${\cal L}$ contains such a
  path) but every such path uses $B_r(x)$ (because in $\Phi$ our path
  $P$ did not intersect with any path joining $L$ to $B_r(x)$, and
  hence, by the choice of $\varepsilon$, neither does any modification
  of $P$ by moving points a distance at most $\varepsilon/(d+1)$ in
  each coordinate, and the rest of $\Phi$ is unchanged).

  Note that our regions ${\cal K},{\cal L}$ are unions of rational
  rectangles in $(d+1)$-space, not in $d$-space as in
  \cite{GuRao99}.  To see that \cite{GuRao99} is applicable, 
 note that we can generate our Poisson process
  $\Phi = \sum_i \delta_{(x_i,r_i)}$ in $ \R^d \times \R_+$ from a
  homogeneous point process $\sum_{i} \delta_{y_i}$ of intensity $t$
  in $\R^d \times [0,1]$, with the spatial locations $x_i$ generated
  by projecting $y_i$ onto the first $d$ coordinates and the random
  radii $r_i$ generated as a suitable increasing function (namely, the
  quantile function of $F$) of the final coordinate of $y_i$.

  Now let $n\to\infty$. Since $(J_L^n)_{n \geq 1}$ is a decreasing
  sequence of events and $\cap_n J_L^n=J_L$, we have
  $\theta_L^n (t)\to \theta_L(t)$, and for every $\phi\in\bN$ we have
 \begin{displaymath}
   \I\{\phi \in E_{n,x,r}\}
   \to \I\{\phi+\delta_{(x,r)}\in J_L \}
    \I\{\phi \notin J_L\}.
  \end{displaymath}
  Also
  $\BP_{tF}\{\Phi\in J_{B_r(x)}^n\}\to \BP_{tF}\{\Phi\in J_{B_r(x)}\}
  =\theta_{B_r}(t)$ by stationarity.

By the definition (\ref{rstabdef}),
the first factor of the integrand in~\eqref{eq:33} satisfies
\begin{displaymath}
    \BP_{tF} \{ \Phi \in E_{n,x,r} \}
    \le  \BP_{tF} \{ |x|-r \leq R_{L,b}(0,\Phi) < \infty
    \}.
  \end{displaymath}
  Note that $R''_{L,b}(0,\Phi) = \infty$ and therefore
  $R_{L,b}(0,\Phi) = R'_{L,b}(0,\Phi) = R'_{L,b}(\Phi,0)$.  The above
  inequality is true because if $Z_L(\Phi)\cap B_r(x) = \emptyset $,
  then $B_r(x)$ cannot be pivotal for $J_L^n$ in
  $\Phi+\delta_{(x,r)}$.  Indeed, there must be a path joining $L$ to
  $B_r(x)$ to give $B_r(x)$ a chance of being pivotal, but if this
  path is part of $Z_\infty(\Phi)$ then $B_r(x)$ is not pivotal.

Recall that we are assuming
  $F((b,\infty))=0$.  By (\ref{rstabdef}) we have
  $Z_L(\Phi) \subset B_{R_{L,b}(\Phi,0)} $, $\BP_{tF}$-almost
  surely.  Hence by (\ref{MPL2}) from Lemma \ref{lint} the integrand
  in (\ref{eq:33}) is bounded by an integrable function of $(x,r)$.
  Hence, by (\ref{eq:33}) and dominated convergence, setting
  $\theta'_L(t):=\frac{d}{dt}\theta_L(t) $ we have
  \begin{align}
    \theta_L (t)\le & (\theta_L (t))^2 + t \iint \BP_{tF} 
                      \{\Phi + \delta_{(x,r)} \in J_L, \Phi \notin J_L
                      \}\,
                      \theta_{B_r}(t) \, dx\,F(dr)
                      \notag
    \\
    \le & (\theta_L(t))^2 + t\theta_{B_b}(t) \theta'_L (t),
          \label{2}
  \end{align}
  where for the last line we have used Theorem \ref{diff} and the
  monotonicity of $\theta_{B_r}(t)$ in $r$.

  Next we bound $\theta_L(t) / \theta_{B_b}(t)$ from below.  Pick
  $x \in L$.  If $B_b(x) \cap Z_\infty (\Phi) \neq \emptyset$ and also
  $B_b(x) \subset Z(\Phi)$, then
  $L \cap Z_\infty(\Phi) \neq \emptyset$.  Hence by the Harris-FKG
  inequality for Poisson processes (see, e.g.\ \cite[Ch.20]{LaPe16}),
  and translation-invariance, we have for $t\ge t_c$
  that
  \begin{equation}
    \theta_L(t) \ge \theta_{B_b(x)}(t)\BP_{tF}\{B_b(x) \subset Z\} =
    \theta_{B_b}(t)\BP_{t_c F}\{B_b \subset Z\}.
    \label{eq:1}
  \end{equation}
  Setting $\alpha =\BP_{t_cF}\{B_b\subset Z\}$ and
  substituting~\eqref{eq:1} into~\eqref{2},
  we get to
 \begin{displaymath}
   \theta'_L(t) \geq \frac{(1-\theta_L(t))\theta_L(t) }{t \theta_{B_b}(t) }
   \geq \frac{\alpha (1-\theta_L(t))}{t}.
 \end{displaymath}

 Integrating over $t$, using the continuity of $\theta'_L(\cdot)$
on $(t_c,\infty)$
and the monotonicity of $\theta_L(\cdot)$,
 we therefore have 
 that
  \begin{displaymath}
    \theta_L(t)-\theta_L(t_c)\geq \alpha (t-t_c)\frac{1-\theta_L(t)}{t}
 ,
  \end{displaymath}
which is \eqref{eq:rate1}. Since $\theta_L(t)$ is continuous from the right,
  $\theta_L(t)=\theta_L(t_c)+o(1)$ as $t\downarrow t_c$, giving~\eqref{eq:rate2}.
\end{proof}

\section{Final remarks}
\label{secfinal}

In this paper we have studied the capacity functional of the 
infinite cluster of a spherical Boolean model. Our main results
(Theorems \ref{diff} and \ref{texponent}) require the radii
to be deterministically bounded. It can be expected that these
results also hold for more general Boolean models with
connected grains having a deterministically bounded circumradius.
It can also be conjectured that good moment properties
of the circumradius should suffice to imply the result for 
unbounded radii. The proof
of this latter extension, however, does not seem to be 
straightforward.

The methods of this paper can probably be used to
derive differentiability properties of the expectations of 
other functionals of the Boolean model. 
A whole family of such functionals in the subcritical regime can be defined
in terms of the number $N_r$, $r>0$,  of grains
in the cluster of $Z(\Phi+\delta_{(0,r)})$,  intersecting  the
ball $B_r$. 
Given 
$F \in \bM_1^\sharp$ and $m \in \Z$,
it is then of interest to study the
functional $\int \BE_{tF} [N_r^m]\,F(dr)$ as a function of $t<t_c$. 
In the case $m=-1$ this is the mean number of clusters
per a typical Poisson point. Preliminary results in the latter
case can be found in \cite{JiZhG11}.

A natural step after the infinite differentiability would be to show
that the capacity is an analytic function of intensity in the
supercritical phase.  It might be possible to use
(\ref{M18}) to show that for fixed supercritical $t$,
the Taylor series for $\theta_L((t+h)F)$ as a function
of $h$ has positive radius of convergence; however this seems
to need tighter bounds han those used here, and hence,
new ideas.  

Also of interest is the {\em $n$-point connectivity function}
of the Boolean model. Given $x_1,\ldots,x_n \in \R^d$, and given 
$F \in \bM^\sharp_1$, for $t >0$
let $\tau_{x_1,\ldots,x_n}(t)$ denote
the $\BP_{tF}$-probability that  the points $x_1, \ldots, x_n$
all lie in the same component
of $Z(\Phi)$. It is not hard to prove
 that $\tau_{x_1,\ldots,x_n}(t)$ is continuous
in $t$  (see, e.g.,~\cite{JiZhG11}). Using the method of proof of
Theorem~\ref{3diff}, it should be possible to show further that
$\tau_{x_1,\ldots,x_n} (t)$ is infinitely differentiable in
$t$ on the interval $(t_c(F),\infty)$.  
Moreover, the $n$-point connectivity function of
$Z_\infty(\Phi)$
(as opposed to that of $Z(\Phi)$) is
certainly infinitely differentiable,
since by the inclusion-exclusion formula the
probability $\BP_{tF}(\cap_{i=1}^n\{x_i \in Z_\infty(\Phi)\})$
can be expressed as a linear combination of the
capacity functionals of the subsets of $\{x_1,\ldots,x_n\}$,
plus a constant.

It may be possible to generalize Theorem~\ref{texponent} as follows.
Let $F_0 \in \bM^\sharp$ and
 $F \in \bM_1^\sharp.$
Let $t_c(F_0,F)$ be
the supremum of those  $t$ such that $\theta(F_0+tF) =0$
and assume $F_0,F$ are such that $t_c(F_0,F) >0$. 
Then we might expect that  similar results to (\ref{eq:rate1}) and
(\ref{eq:rate2})  would hold with $tF$ replaced
by $F_0+tF$ (and $t_c F$ replaced by $F_0+ t_cF$ 
and $t_1 F$ replaced by $F_0 + t_1F$
 wherever they  appear.

We have shown that $\theta_L$ is (under the
assumptions of Theorem \ref{diff}) infinitely differentiable
on $(t_c,\infty)$. 
It would be
extremely interesting to understand the behaviour
of the second derivative near the critical value.
We leave this as a challenging problem for future research.

\bigskip
\noindent{\bf Acknowledgment.}
  The mistake in the original paper \cite{LPZ17} was originally
  brought to our attention by Emil Schmitz, to whom we are indebted
  for pointing this out to us.

  GL and MP were supported by the German Research Foundation (DFG)
  through the research unit "Geometry and Physics of Spatial Random
  Systems" under the grant HU 1874/3-1. MP was supported by EPSRC grant EP/T028653/1.


\begin{thebibliography}{}

\bibitem[\protect\citeauthoryear{Bollob\'{a}s}{Bollob\'{a}s}{1979}]{Boll79}
Bollob\'{a}s, B. (1979).
\newblock {\em Graph Theory: An Introductory Course}.
\newblock New York: Springer.

\bibitem[\protect\citeauthoryear{Chayes and Chayes}{Chayes and
  Chayes}{1986}]{ChayesCh86}
Chayes, J. and L.~Chayes (1986).
\newblock Inequality for the infinite-cluster density in {B}ernoulli
  percolation.
\newblock {\em Phys Rev Lett.\/}~{\em 56\/}(16), 1619--1622.

\bibitem[\protect\citeauthoryear{Duminil-Copin and Tassion}{Duminil-Copin and
  Tassion}{2015}]{Duminil15}
Duminil-Copin, H. and V.~Tassion (2015).
\newblock A new proof of sharpness of the phase transition for {B}ernoulli
  percolation on {$\Z^d$}.
\newblock Technical report, ArXiv:1502.03051.

\bibitem[\protect\citeauthoryear{Gou\'er\'e}{Gou\'er\'e}{2008}]{Gouere08}
Gou\'er\'e, J.-B. (2008).
\newblock Subcritical regimes in the poisson boolean model of continuum
  percolation.
\newblock {\em Ann. Probab.\/}~{\em 36\/}(4), 1209--1220.

\bibitem[\protect\citeauthoryear{Gou\'er\'e and Marchand}{Gou\'er\'e and
  Marchand}{2011}]{GouereMarchand11}
Gou\'er\'e, J.-B. and R.~Marchand (2011).
\newblock Continuum percolation in high dimensions.
\newblock Technical report, arXiv:1108.6133.

\bibitem[\protect\citeauthoryear{Grimmett}{Grimmett}{1999}]{Gr99}
Grimmett, G. (1999).
\newblock {\em Percolation\/} (2nd ed.).
\newblock Springer--Verlag.

\bibitem[\protect\citeauthoryear{Grimmett and Marstrand}{Grimmett and
  Marstrand}{1990}]{GM}
Grimmett, G. and J.~Marstrand (1990).
\newblock The supercritical phase of percolation is well behaved.
\newblock {\em Proc. Roy. Soc. London Ser.~A\/}~{\em 430}, 439--445.

\bibitem[\protect\citeauthoryear{Gupta and Rao}{Gupta and Rao}{1999}]{GuRao99}
Gupta, J. and B.~Rao (1999).
\newblock van den {B}erg-{K}esten inequality for the {P}oisson boolean model
  for continuum percolation.
\newblock {\em Sankhya Ser. A\/}~{\em 61}, 337--346.

\bibitem[\protect\citeauthoryear{Hall}{Hall}{1988}]{Hall88}
Hall, P. (1988).
\newblock {\em Introduction to the Theory of Coverage Processes}.
\newblock Wiley.

\bibitem[\protect\citeauthoryear{Jiang, Zhang, and Guo}{Jiang
  et~al.}{2011}]{JiZhG11}
Jiang, J., S.~Zhang, and T.~Guo (2011).
\newblock Russo's formula, uniqueness of the infinite cluster, and continuous
  differentiability of free energy for continuum percolation.
\newblock {\em J. Applied Probab.\/}~{\em 48}, 597--610.

\bibitem[\protect\citeauthoryear{Kallenberg}{Kallenberg}{2002}]{Kallenberg}
Kallenberg, O. (2002).
\newblock {\em Foundations of Modern Probability\/} (Second Edition ed.).
\newblock New York: Springer.

\bibitem[\protect\citeauthoryear{Last}{Last}{2014}]{Last12}
Last, G. (2014).
\newblock Perturbation analysis of {P}oisson processes.
\newblock {\em Bernoulli\/}~{\em 20\/}(2), 486--513.

\bibitem[\protect\citeauthoryear{Last and Penrose}{Last and
  Penrose}{2016}]{LaPe16}
Last, G. and M.~Penrose (2016).
\newblock {\em Lectures on the Poisson Process}.
\newblock IMS Textbook. Cambridge University Press.

\bibitem[\protect\citeauthoryear{Last, Penrose, and Zuyev}{Last
  et~al.}{2017}]{LPZ17}
Last, G., M.~Penrose, and S.~Zuyev (2017).
\newblock On the capacity functional of the infinite cluster of a {B}oolean
  model.
\newblock {\em Ann. Appl. Probab.\/}~{\em 27}, 1678--1801.
\newblock Also arXiv:1601.04945v2.

\bibitem[\protect\citeauthoryear{Last, Penrose, and Zuyev}{Last
  et~al.}{2024}]{LPZ24}
Last, G., M.~Penrose, and S.~Zuyev (2024).
\newblock Corrections: On the capacity functional of the infinite cluster of a
  {B}oolean model.
\newblock {\em Ann. Appl. Probab.\/}~{\em 34}, 3370--3374.

\bibitem[\protect\citeauthoryear{Meester and Roy}{Meester and
  Roy}{1996}]{MeesterRoy96}
Meester, R. and R.~Roy (1996).
\newblock {\em Continuum Percolation}.
\newblock Cambridge University Press.

\bibitem[\protect\citeauthoryear{Meester, Roy, and Sarkar}{Meester
  et~al.}{1994}]{MRS}
Meester, R., R.~Roy, and A.~Sarkar (1994).
\newblock Nonuniversality and continuity of the critical covered volume
  fraction in continuum percolation.
\newblock {\em J. Statist. Phys.\/}~{\em 75}, 123--134.

\bibitem[\protect\citeauthoryear{Molchanov}{Molchanov}{2005}]{Mol05}
Molchanov, I. (2005).
\newblock {\em Theory of Random Sets}.
\newblock Probability and its Applications. Springer.

\bibitem[\protect\citeauthoryear{Molchanov and Zuyev}{Molchanov and
  Zuyev}{2000}]{MolZuy:00}
Molchanov, I. and S.~Zuyev (2000).
\newblock Variational analysis of functionals of {P}oisson processes.
\newblock {\em Math. Oper. Research.\/}~{\em 25\/}(3), 485--508.

\bibitem[\protect\citeauthoryear{Penrose}{Penrose}{2003}]{Pe03}
Penrose, M. (2003).
\newblock {\em Random Geometric Graphs}.
\newblock Oxford University Press.

\bibitem[\protect\citeauthoryear{Penrose}{Penrose}{2007}]{PenEJP}
Penrose, M. (2007).
\newblock Gaussian limits for random geometric measures.
\newblock {\em Electron. J. Probab.\/}~{\em 12}, 989--1035.

\bibitem[\protect\citeauthoryear{Schmitz}{Schmitz}{2022}]{Schmitz22}
Schmitz, E. (2022).
\newblock Analyticity of the capacity functional of the infinite cluster in the
  {B}oolean model.
\newblock Master's thesis, Master's thesis, Institut f\"ur Mathematik, TU
  Berlin.

\bibitem[\protect\citeauthoryear{Schneider and Weil}{Schneider and
  Weil}{2008}]{SW08}
Schneider, R. and W.~Weil (2008).
\newblock {\em Stochastic and Integral Geometry}.
\newblock Berlin: Springer.

\bibitem[\protect\citeauthoryear{Stoyan, Kendall, and J.}{Stoyan
  et~al.}{1995}]{SKM}
Stoyan, D., W.~Kendall, and M.~J. (1995).
\newblock {\em Stochastic Geometry and its Applications\/} (Second Edition
  ed.).
\newblock Chichester: Wiley.

\bibitem[\protect\citeauthoryear{Tanemura}{Tanemura}{1993}]{Tanemura:93}
Tanemura, H. (1993).
\newblock Behavior of the supercritical phase of a continuum percolation model
  in {$\R^d$}.
\newblock {\em J. Appl. Probab.\/}~{\em 30}, 382--396.

\bibitem[\protect\citeauthoryear{Zuev}{Zuev}{1992}]{Zue:92b}
Zuev, S.~A. (1992).
\newblock {R}usso's formula for the {P}oisson point processes and its
  applications.
\newblock {\em Diskretnaya matematika\/}~{\em 4\/}(3), 149--160.
\newblock (In Russian). English translation: {\it Discrete Math. and
  Applications}, {\bf 3}, 63-73, 1993.

\bibitem[\protect\citeauthoryear{Zuev and Sidorenko}{Zuev and
  Sidorenko}{1985}]{ZueSid:85b}
Zuev, S.~A. and A.~F. Sidorenko (1985).
\newblock Continuous models of percolation theory~{II}.
\newblock {\em Teotretich. i Matematich. Fizika\/}~{\em 62\/}(2), 253--262.
\newblock (In Russian). English translation: {\it Theoretical and Mathematical
  Physics} {\bf 62}, 171-177, 1985.

\end{thebibliography}

\end{document}